\newcommand{\R}{\mathbb{R}}
\newcommand{\Z}{\mathbb{Z}}
  \newcommand\vertarrowbox[3][6ex]{%
  \begin{array}[t]{@{}c@{}} #2 \\
  \left\uparrow\vcenter{\hrule height #1}\right.\kern-\nulldelimiterspace\\
  \makebox[0pt]{\scriptsize#3}
  \end{array}%
}
\let\olditem\item
\newlist{methods}{itemize}{1}
\setlist[methods]{%
    align=right,
    before=\changeitem,
    font=\bfseries,
    after=\let\item\olditem
}
\newcommand*{\changeitem}{%
    \renewcommand*{\item}[1][]{%
        \olditem[##1 :]
    }%
}
\theoremstyle{plain}
\newtheorem{theorem}{Theorem}
\newtheorem{corollary}[theorem]{Corollary}
\newtheorem{lemma}[theorem]{Lemma}
\newtheorem{proposition}[theorem]{Proposition}
\theoremstyle{definition}
\newtheorem{definition}[theorem]{Definition}
\newtheorem{remark}[theorem]{Remark}
\newcommand{\ran}{\rangle}
\newcommand{\lan}{\langle}
\newcommand\restr[2]{\ensuremath{\left.#1\right|_{#2}}}
\newcommand{\spa}{\;\;\;\;\;\;\;\;\;\;\;\;\;\;}
\newcommand{\til}{\Tilde}
\newcommand{\dl}{\partial}
\title[Control of Baouendi-Grushin eigenfunctions]{A Spectral Condition For The Control Of Eigenfunctions Of Baouendi-Grushin Type Operators}
\author[Mohammad Harakeh]
{Mohammad Harakeh}
\email{mohamadharakeh5@gmail.com}
\address{Institut Denis Poisson, UMR 7013 Universit\'e d'Orl\'eans-CNRS-Université de Tours,\\
Orl\'eans, France}
\author[L. Hillairet]
{Luc Hillairet}
\email{luc.hillairet@math.univ-orleans.fr}
\address{Institut Denis Poisson, UMR 7013 Universit\'e d'Orl\'eans-CNRS-Université de Tours,\\
Orl\'eans, France}
\thanks{This work benefited from the support of the PCR grant ADYCT- ANR-20-CE40-0017 from the Agence Nationale de la Recherche.}    
\date{\today}
\begin{document}
	\maketitle
	
	\begin{abstract}
          We prove a generic simplicity result on the multiplicity of the eigenvalues of the generalized Baouendi Grushin operator that
          implies the validity of concentration inequality for eigenfunctions.

          \noindent\textbf{Keywords: Baouendi Grushin operator, generic spectral property, concentration inequality for eigenfunctions} 
	\end{abstract}

\section{Introduction}
Concentration inequalities for eigenfunctions are estimates of the probability that the eigenfunctions of a certain type of operator will be
concentrated in a particular region of the underlying space, more specifically whether the magnitude of the eigenfunctions on the manifold can
be controlled by its magnitude on some sub-domain.  Mathematically speaking, for a smooth connected manifold $M$, a possibly unbounded operator $T$
defined on $L^2(M)$ is said to satisfy the concentration inequality on a subset $N$ of $M$ (control region) if
  \begin{equation}\label{ci}
  \exists c(N)>0,\forall E\in\text{spec}(T),\forall \phi\in \text{ker}(T-E),~~\norm{\phi}_{{L^2}(M)}\leq c(N)\norm{\phi}_{{L^2}(N)}.
  \end{equation}

  This inequality is related to the observability question which is, usually a stronger result and has been studied in several papers. For instance,
  the authors in \cite{zbMATH06194652} prove the concentration inequality for eigenfunctions of the Schr\"odinger operator on a torus and 
  mention that the concentration inequality for eigenfunctions can be derived from the observability of the Schr\"odinger equation and Duhamel's formula.
  We refer to \cite{burq2004geometric,miller2012resolvent} for detailed studies about the link between observability and
  concentration inequalities for eigenfunctions.

  Observability and controllability for different types of operators on different domains have been widely studied
  (see for instance in  \cite{bourgain2012restriction,zbMATH05173335,burq2004geometric,dermenjian2022concentration,hassell2009eigenfunction}). More precisely in these references, the relation between the spectral problem and the underlying geometry is studied.
  Indeed, a well-known sufficient condition for concentration-type inequalities is the so-called Geometric Control Condition of Bardos-Lebeau-Rauch (GCC) that
  has been introduced in \cite{bardos1992sharp}. This condition says that all the trajectories of the generalized geodesic flow will enter the control
  region before some time. This condition is equivalent to observability in many cases such as the wave equation
  (see \cite{bardos1992sharp,burq2004smoothing}), but only sufficient in others such as for the Schr\"odinger equation (see \cite{lebeau1992controle}).
  For instance, the concentration inequality for eigenfunctions still holds when the only obstruction to GCC is the existence of flat cylinders of
  periodic orbits that do not intersect the control region (see \cite{Jaffard90,burq2004geometric,Marzuola2006,hassell2009eigenfunction}). 

  In all the previously stated works, the underlying operator is associated to a Riemannian Laplacian or more generally an elliptic
  pseudodifferential operator. In this work, we will study instead hypoelliptic operators in relation with sub-Riemannian geometry.
  Control and observability for this kind of operators have also been studied (see, for instance \cite{burq2022time,koenig2017non}) using
  involved techniques allowing for a fine understanding of the geometry of the phase-space. In \cite[Chapter~3]{letrouit2021equations}, the author
  considers the operator $P=-\dl_x^2-|x|^{2\gamma}\dl_y^2$ on $[-1,1]_x\times \mathbb T_y$, where $\mathbb T$ denotes the 1-dimensional torus and proves
  the following theorem  
\begin{theorem}
  [Letrouit \cite{letrouit2021equations}]\label{8564} Let $\gamma\geq 1$ and let $w=(-1,1)_x\times I$ for some interval $I$. Then, there
  exists $C,h_0>0$ such that for any $u\in D(\Delta_\gamma),$ and any $0<h\leq h_0$, \begin{equation}
        \norm{u}_{L^2(M)}\leq C\left(\norm{u}_{L^2(w)}+h^{-(\gamma+1)}\norm{(h^2\Delta_\gamma+1)u}_{L^2(M)}\right).
    \end{equation}
  \end{theorem}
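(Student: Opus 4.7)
The natural approach, exploiting the product structure $M = [-1,1]_x \times \mathbb T_y$, is a partial Fourier decomposition in the $y$ variable followed by semiclassical analysis in $x$ on each mode. Writing $u(x,y) = \sum_{n \in \Z} u_n(x) e^{iny}$, the operator $\Delta_\gamma$ is block-diagonal: on the $n$-th mode it acts as the one-dimensional Schr\"odinger operator $P_n := -\dl_x^2 + n^2 |x|^{2\gamma}$ on $(-1,1)$ with Dirichlet boundary conditions, so $(h^2\Delta_\gamma+1)u$ decouples into $(h^2 P_n+1)u_n$. The dilation $x = |n|^{-1/(\gamma+1)}\xi$ conjugates $P_n$ to $|n|^{2/(\gamma+1)}(-\dl_\xi^2 + |\xi|^{2\gamma})$, so for large $|n|$ the eigenfunctions concentrate near $x=0$ at scale $|n|^{-1/(\gamma+1)}$ with eigenvalues scaling as $|n|^{2/(\gamma+1)}$. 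Requiring $h^2 |n|^{2/(\gamma+1)} \sim 1$ singles out the critical frequency window $|n| \sim h^{-(\gamma+1)}$, which is precisely the loss announced in the theorem.

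\textbf{Handling each regime.} I would split the sum into three ranges of $y$-frequency. For small $|n|$ the semiclassical factor $h^2 P_n$ is itself small, so spectral calculus gives a uniform bound on $u_n$ in terms of the right-hand side with no loss. For large $|n| \gg h^{-(\gamma+1)}$ the spectrum of $h^2 P_n$ is bounded below by a large constant, and again a direct functional-calculus bound applies. The delicate case is the critical window $|n| \sim h^{-(\gamma+1)}$, where $h^2 \lambda_{n,k}$ can approach the critical spectral parameter and the resolvent is singular. Here one must genuinely use the observation term $\norm{u}_{L^2(w)}$: the corresponding quasi-eigenfunctions concentrate in $x$ near the origin, which lies in the interior of the $x$-projection of $w$, so the $x$-part of the observation picks them up efficiently.

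\textbf{Recombination with the partial $y$-observation.} The technical heart of the argument is relating the mode-sum $\norm{u}_{L^2(M)}^2 = \sum_n \norm{u_n}_{L^2(-1,1)}^2$ to the partial observation $\norm{u}_{L^2(w)}^2 = \int_{-1}^1 \int_I \bigl|\sum_n u_n(x) e^{iny}\bigr|^2\,dy\,dx$, which is not diagonal since $I \subsetneq \mathbb T$. My plan would be to apply an Ingham-type inequality to the active $y$-frequencies in the critical window, exploiting the fact that the quantization condition $h^2\lambda_{n,k} \approx 1$ combined with the asymptotics of the eigenvalues of $P_n$ constrains the relevant $n$'s to a discrete set whose spacing, compared to the length of $I$, controls the loss. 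Outside the critical window the direct mode-by-mode estimates sum cleanly to an $L^2(M)$ bound.

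\textbf{Main obstacle.} The conceptual obstruction, and the reason the loss is nontrivial, is the failure of classical Geometric Control: bicharacteristics of the principal symbol $p = \xi^2 + |x|^{2\gamma}\eta^2$ with $\eta = 0$ are horizontal in $y$ and miss $w$ whenever $y_0 \notin I$. The polynomial loss $h^{-(\gamma+1)}$ quantifies the cost of overcoming these trapped trajectories quantum-mechanically, and the hardest step will be the Ingham-type recombination at the critical scale: one must show that the partial $y$-cutoff $\mathbf{1}_I$ is almost diagonal on the span of critical quasi-eigenfunctions without incurring any polynomial loss worse than the claimed $h^{-(\gamma+1)}$. A natural alternative, which would run into a very similar difficulty, is a contradiction argument using semiclassical defect measures for the Baouendi-Grushin operator, identifying the offending trapped measure and ruling it out via the concentration estimate at $x=0$.
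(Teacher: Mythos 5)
The paper does not prove this statement: Theorem~\ref{8564} is quoted from Letrouit \cite{letrouit2021equations}, and the authors only remark that the proof there ``is geometric and depends on some geometric control condition.'' There is therefore no internal proof to compare against, so I assess your sketch on its own merits.

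Your initial reduction is correct: the partial Fourier decomposition in $y$, the reduction to $P_n=-\partial_x^2+n^2|x|^{2\gamma}$ on $(-1,1)$ with Dirichlet conditions, the dilation to $|n|^{2/(\gamma+1)}(-\partial_\xi^2+|\xi|^{2\gamma})$, and the consequent critical window $|n|\sim h^{-(\gamma+1)}$ are exactly the right scaling heuristics, and they do explain where the exponent $\gamma+1$ must come from. However, two points keep this from being a proof. First, your handling of the small-$|n|$ regime is flawed: for a fixed small $|n|$ the operator $P_n$ is unbounded on $(-1,1)$, so $h^2P_n$ is certainly not ``itself small'' as an operator; it has eigenvalues $\lambda_{n,j}\approx h^{-2}$ (namely $j\approx h^{-1}$) at which $1-h^2\lambda_{n,j}$ vanishes, and functional calculus alone gives no uniform control there. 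In fact the $\eta=0$ trapped bicharacteristics that you identify in your final paragraph as the obstruction to geometric control are carried precisely by these small-$|n|$, high-$j$ modes (wave-packets at scale $\sqrt h$ in $y$ superpose $|n|\lesssim h^{-1/2}\ll h^{-(\gamma+1)}$), so the regime you dispatch with ``spectral calculus gives a uniform bound with no loss'' is actually one of the regimes where the observation term is essential. Second, for the critical window you name two candidate strategies --- an Ingham-type almost-orthogonality argument over the active $y$-frequencies, or a semiclassical defect-measure/propagation argument --- but carry out neither, and the number of active integer frequencies is $\sim h^{-(\gamma+1)}$, so a crude Tur\'an-type bound would give an exponential, not polynomial, loss; establishing that the loss is exactly $h^{-(\gamma+1)}$ is the entire content of the theorem and remains unproved in your plan.
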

  that implies the concentration inequality for eigenfunctions. The proof for this theorem is geometric and depends on some geometric control condition.

  In this paper,  we are interested in proving similar results by purely spectral methods that are reminiscent of the work with cylinders of periodic orbits.
  We identify a purely spectral condition amounting to a spectral simplicity condition,
  that implies the validity of concentration inequalities for eigenfunctions of Baouendi Grushin operators with control on an arbitrary
  horizontal strip of the infinite cylinder. In particular, denoting by $P_V$ the Bouendi-Grushin operator, we prove that
\begin{theorem}
    \label{mlml} If all the eigenvalues of $P_V$ are of multiplicity two, then the concentration inequality holds true.
\end{theorem}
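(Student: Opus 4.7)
The plan is to Fourier-expand every eigenfunction in the periodic variable $y$, exploit the multiplicity-$2$ hypothesis to force a rigid form, and then reduce \eqref{ci} to an elementary uniform lower bound for a Hermitian form on $\mathbb C^2$ in which the $x$-profile plays no role.

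First I would decompose any $\Phi \in \ker(P_V - E)$ as $\Phi(x,y) = \sum_{n \in \Z} \phi_n(x) e^{iny}$ and introduce, for each $n \in \Z$, the one-dimensional Schr\"odinger operator $P_n := -\dl_x^2 + n^2 V(x)$ on $L^2(\R)$. Since $V$ grows at infinity, $P_n$ has discrete simple spectrum $\{E_k(n)\}_{k \geq 1}$ whenever $n \neq 0$, while $P_0 = -\dl_x^2$ has empty point spectrum, and the eigenequation $P_V\Phi = E\Phi$ decouples into $P_n\phi_n = E\phi_n$. Because $P_n$ depends on $n$ only through $n^2$, the identity $P_n = P_{-n}$ already produces multiplicity at least $2$ for every joint eigenvalue $E_k(n)$, $n \geq 1$. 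The multiplicity-$2$ hypothesis then forces a unique such pair $(k,n)$ with $n \geq 1$ realising $E$, so every eigenfunction of $P_V$ takes the rigid form
\begin{equation*}
    \Phi(x,y) = \phi_{k,n}(x)\bigl(a\,e^{iny} + b\,e^{-iny}\bigr), \qquad a,b \in \mathbb C.
\end{equation*}
After this reduction the $x$-profile $\phi_{k,n}$ is common to both members of \eqref{ci} and cancels from any ratio of $L^2$-norms.

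Taking $N = \R_x \times I$ a horizontal strip with $I \subset \uc_y$ a non-empty open arc, Fubini gives
\begin{equation*}
    \frac{\|\Phi\|_{L^2(M)}^2}{\|\Phi\|_{L^2(N)}^2} = \frac{2\pi(|a|^2 + |b|^2)}{(|a|^2+|b|^2)|I| + 2\Re\!\left(a\bar b\int_I e^{2iny}\,dy\right)}.
\end{equation*}
To close the argument I would bound the denominator below uniformly in $(a,b,n)$: for every $n \neq 0$ the function $e^{2iny}$ does not have constant phase on $I$, so the triangle inequality is strict, $\bigl|\int_I e^{2iny}\,dy\bigr| < |I|$, while a single integration by parts gives $\bigl|\int_I e^{2iny}\,dy\bigr| = O(1/|n|)$. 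Setting $\delta(I) := \inf_{n \neq 0}\Bigl(|I| - \bigl|\int_I e^{2iny}\,dy\bigr|\Bigr) > 0$ and using the elementary estimate $|2\Re(a\bar b z)| \leq (|a|^2+|b|^2)|z|$ yields \eqref{ci} with $c(N)^2 = 2\pi/\delta(I)$.

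The genuinely hard part is hidden in the rigid form: it is the multiplicity hypothesis that collapses every eigenspace onto the pair $\{e^{iny}, e^{-iny}\}$ for a single $n$ and rules out destructive interferences across distinct Fourier modes. Without it an eigenfunction could combine several modes $n$ with unrelated $x$-profiles, the cross terms on $N$ might drive $\|\Phi\|_{L^2(N)}$ to zero while $\|\Phi\|_{L^2(M)}$ remains of unit size, and no uniform constant $c(N)$ could exist.
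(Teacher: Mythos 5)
Your proposal is correct and takes essentially the same route as the paper: separate variables, use the multiplicity-$2$ hypothesis to force the rigid form $\phi_{k,n}(x)\bigl(a\,e^{iny}+b\,e^{-iny}\bigr)$, integrate out $x$, and reduce to an elementary estimate on the arc $I$. The one genuine difference is in how you close: the paper computes $\lim_{k\to\infty}\min_{\alpha,\beta}\|\phi\|^2_{L^2(w)}/\|\phi\|^2_{L^2(X)} = (b-a)/(2\pi)>0$, separately argues that each per-$k$ minimum is strictly positive (because $\alpha e^{ik\cdot}+\beta e^{-ik\cdot}$ cannot vanish on $(a,b)$), and then assembles these two facts via a contradiction argument; you instead establish a single uniform lower bound $\delta(I) = \inf_{n\neq 0}\bigl(|I| - |\int_I e^{2iny}\,dy|\bigr) > 0$ directly, using the strict triangle inequality for each finite $n$ together with $|\int_I e^{2iny}\,dy| = O(1/|n|)$ to control the tail. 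This packaging is a bit cleaner — it yields the explicit constant $c(N)^2 = 2\pi/\delta(I)$ at once and avoids the extraction of a subsequence — while carrying exactly the same mathematical content. Both arguments rest on the same two observations (the per-$n$ strict inequality and the decay of $\int_I e^{2iny}\,dy$), so I would call this the same proof with a more economical final step.
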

This theorem will be restated as theorem \ref{conv}. The latter theorem is not restricted to Baouendi-Grushin operators, but rather
closely related to separation of variables.

We then study this multiplicity condition in a special class of operators. We recover the fact that the multiplicities of the eigenvalues of
the simple Grushin operator are not uniformly bounded and extend this result to our class (see section \ref{soae}). This
ensures that we cannot expect the spectral condition to be true in general and we thus study this condition generically in the class of
Bouendi-Grushin operators.

A generic result of simplicity of eigenvalues for elliptic operators was first introduced by Albert in his thesis \cite{albert1971topology}, see also
\cite{albert1975genericity}. Later, Uhlenbeck showed that the theorem does hold in all dimensions \cite{uhlenbeck1976generic}.
We prove here a variation on Albert's methods in \cite{albert1975genericity} that implies a similar result for the subelliptic generalized Baouendi Grushin operator.
In particular, we prove the following (see sect. \ref{secgencaspp} for the proper definition of the class of operators that is studied).
\begin{theorem}
    \label{brrr} The multiplicity of the eigenvalues of the generalized Baouendi-Grushin operator is generically $2$. 
\end{theorem}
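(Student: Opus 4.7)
The plan is to adapt the Albert--Uhlenbeck generic simplicity machinery to the subelliptic setting, exploiting the Fourier decomposition in the periodic variable that is intrinsic to the Baouendi--Grushin structure. First I would equip the admissible class of potentials (as defined in section \ref{secgencaspp}) with a suitable Banach space structure $\mathcal{B}$ for which $V\mapsto P_V$ depends analytically on $V$. Fourier expansion in the $y$-variable then yields a decomposition
\[
P_V \;\cong\; \bigoplus_{n\in\mathbb{Z}} L_n^V, \qquad L_n^V = -\partial_x^2 + n^2 V(x),
\]
and the symmetry $n\leftrightarrow -n$ forces multiplicity at least $2$ on every nonzero Fourier mode. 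Theorem \ref{brrr} thus amounts to showing that, generically, this trivial degeneracy is the only source of multiplicity.

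The strategy is a Baire category argument. The exceptional set $\mathcal{B}_{\text{bad}}$ of $V$ for which $P_V$ admits an eigenvalue of multiplicity strictly greater than $2$ is contained in the countable union
\[
\bigcup_{\substack{(n,k),(m,\ell)\\ |n|\neq |m|}} B_{n,k,m,\ell}, \qquad B_{n,k,m,\ell} := \{V\in\mathcal{B}:\lambda_k(L_n^V) = \lambda_\ell(L_m^V)\},
\]
since each $L_n^V$ is of Sturm--Liouville type and hence automatically has simple spectrum. Closedness of each $B_{n,k,m,\ell}$ follows from Kato's analytic perturbation theory applied to each fibered operator; the substance of the proof lies in establishing nowhere density.

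Nowhere density is obtained through a transversality step. Given $V_0\in B_{n,k,m,\ell}$ with common eigenvalue $\lambda$ and real $L^2$-normalized eigenfunctions $\phi_n,\phi_m$, the Feynman--Hellmann formula applied to the perturbation $V_0+tW$ gives
\[
\frac{d}{dt}\bigg|_{t=0}\bigl(\lambda_k(L_n^{V_0+tW}) - \lambda_\ell(L_m^{V_0+tW})\bigr) \;=\; \int W(x)\bigl(n^2|\phi_n(x)|^2 - m^2|\phi_m(x)|^2\bigr)\,dx.
\]
If one can exhibit an admissible direction $W$ making this quantity nonzero, the implicit function theorem shows that near $V_0$ the set $B_{n,k,m,\ell}$ is cut out by a functional with nonvanishing differential, hence is a codimension-$1$ submanifold and in particular nowhere dense. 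The transversality therefore reduces to the purely analytic claim that the weighted density $n^2|\phi_n|^2 - m^2|\phi_m|^2$ is not identically zero on the base interval. This is the main analytic obstacle: one must rule out coincidental equality of eigenfunction densities coming from genuinely distinct Fourier modes. I would attack it by assuming the density identity, combining the two Schr\"odinger equations (common eigenvalue $\lambda$, distinct couplings $n^2V$ and $m^2V$) with a Wronskian computation to derive a rigid algebraic or differential constraint on $V$, then contradicting it via unique continuation at the degeneracy locus of $V$ or the prescribed boundary behaviour of Grushin eigenfunctions. Once transversality is established in this way, Baire category applied to the countable union of the $B_{n,k,m,\ell}$ delivers the theorem.
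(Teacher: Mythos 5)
Your overall architecture matches the paper's: fiber the operator over Fourier modes, reduce to a countable intersection of open dense sets indexed by pairs of modes (and eigenvalue labels), use continuity of eigenvalues for the topology step and Hellmann--Feynman derivatives for the density/transversality step, and finish with Baire category. Up to cosmetic differences (you phrase the topology step as closedness of the bad sets rather than openness of the complements, and you invoke the implicit function theorem where the paper does a hands-on perturbative splitting), this is the same proof strategy as Theorems \ref{openness} and \ref{denseness} with Lemmas \ref{feyhel}, \ref{milli}, and \ref{kentlahala}.

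The gap is in what you call ``the main analytic obstacle.'' You correctly reduce transversality to the claim that \(n^2|\phi_n|^2 - m^2|\phi_m|^2\) cannot vanish identically, but you then propose to attack it via a Wronskian computation combined with unique continuation at the degeneracy locus of \(V\). That route is left unworked and, as sketched, it is far from clear that it closes: the density identity alone does not produce an obviously useful Wronskian relation, and unique continuation is the wrong tool for comparing eigenfunction densities across distinct couplings. The point you are missing is that there is nothing to do: if \(n^2|\phi_n(x)|^2 = m^2|\phi_m(x)|^2\) almost everywhere, integrate over \(\mathbb{R}\); since \(\phi_n\) and \(\phi_m\) are both \(L^2\)-normalized this gives \(n^2 = m^2\), contradicting \(|n|\neq|m|\). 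That one-line observation is precisely how the paper concludes Lemma \ref{kentlahala}. A second, smaller, issue: your perturbations \(V_0 + tW\) need to be chosen so that the perturbed potential stays in the admissible class \(\mathbb{V}_\gamma\), which forces perturbations of the form \(t|x|^{2\gamma}W\) with \(W\in\mathbb{W}\) smooth, nonnegative, compactly supported; this is why the paper's Hellmann--Feynman formula carries the weight \(|x|^{2\gamma}\), and why the pointwise conclusion there reads \(k^2|x|^{2\gamma}|u|^2 = l^2|x|^{2\gamma}|v|^2\) before dividing by the weight away from the origin.
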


The paper is organized as follows. In section \ref{dfgfd}, we introduce our framework and define the generalized Baouendi Grushin operator.
We then prove the spectral condition. In section \ref{soae}, we study the simple Grushin operator, and prove that the spectral condition is
not true in general. Finally in section \ref{secgencaspp}, we study the general case and prove the genericity result.

\section{Framework}\label{dfgfd}
 
\subsection{The Generalized Baouendi Grushin Operator}\label{subsecoffirsch}
Denote by $X$ the infinite cylinder in $\mathbb{R}^2$, $X=\mathbb{R}\times\mathbb{S}^1$, with fundamental domain $\mathbb R\times [-\pi,\pi]$, and by $w=\mathbb{R}\times[a,b]$, a horizontal strip along $X$. We use the standard Lebesgue measure $dxdy$ on $X$ and denote by $L^2(X)$ the corresponding Hilbert space. In $L^2(X)$ we denote by
$L^2_0(X)$ the orthogonal complement to the functions that depend only on $x$:
\begin{equation}
    \label{l^2_0}L^2_0(X)=\left\{u:X\rightarrow\mathbb R; \int_X |u(x,y)|^2dxdy<\mathcal{1}, \int_{\mathbb{S}^1}u(x,y)dy=0\right\}.
\end{equation}
The subset of $L^2_0(X)$ consisting of smooth functions with compact support will be denoted by $\mathcal{C}^\mathcal{1}_c(\mathbb R)$.

For $\gamma>0$, we introduce the set
$${\mathbb{V}_\gamma}:=\{V=|x|^{2\gamma}\til W;\til W\in\mathcal{C}^0_b(\mathbb R), \til W\geq 1\},$$
where $\mathcal{C}^0_b(\mathbb R)$ denotes the space of bounded continuous functions.

For $V\in {\mathbb{V}_\gamma}$, we set $X_1=\dl_x$ and $X_2=\sqrt{V}\dl_y$.  We define the generalized Baouendi-Grushin (hypoelliptic) operator as
\begin{equation}
    \label{gbgo}P_V:=-X_1^2-X_2^2=-\dl_x^2-V(x)\dl_y^2.
\end{equation}
The operator $P_V$ with domain $\mathcal{C}_{c,0}^\mathcal{1}(X)$ is essentially selfadjoint on $L^2_0(X)$ and the unique selfadjoint extension with domain
\begin{equation}
    \label{dogbgo}D=\{u\in L^2_0(X); \dl_x^2u\in L^2(X), V(x)\dl_y^2u\in L^2(X)\},
\end{equation}
has compact resolvent (see remark \ref{hl2bktba} below). Consequently its spectrum consists of an increasing sequence of positive eigenvalues with finite multiplicity,
that converges to $+\mathcal{1}$.
\subsection{One Dimensional Schr\"odinger Operator} 
For a non-negative continuous function $V$ satisfying $\lim_{|x|\rightarrow+\mathcal{1}}V(x)=+\mathcal{1}$ (in particular for $V\in\mathbb V_\gamma$), we define the one dimensional (parameter dependent) Schr\"odinger operator $$P_V^k:=-\dl_x^2+k^2V(x),$$ with domain $\mathcal{C}^\mathcal{1}_c(\mathbb R)$, defined on $\mathcal{H}=L^2(\R)$.
It is well known that for $k\neq 0$, the operator $(P_V^k,\mathcal{C}^\mathcal{1}_c(\mathbb R))$ is essentially selfadjoint, and that the
unique sselfadjoint extension with domain $\mathbb{D}_V$ satisfying $$\mathbb{D}_V\subset\{u\in H^1(\mathbb R), V^{\frac{1}{2}}u \in \mathcal{H}\},$$
has compact resolvent (see \cite{zbMATH01339094,zbMATH06136470} for instance). Consequently, the spectrum of $P^k_V$ consists of an
increasing sequence of positive eigenvalues with finite multiplicity, that converges to $+\mathcal{1}$, and $\mathcal{H}$ has a orthonormal basis that
consists of eigenfunctions. In the latter case, it is known that each eigenvalue is actually simple (see \cite{pankov2001introduction} for instance).

\section{Spectral Condition}
Let's first observe that there exists a basis of eigenfunctions of $P_V$ of the form
$$\phi(x,y)=\varphi(x)e^{iky}\spa k\in\mathbb Z^*,$$
with $\varphi\in \mathcal{H}$.
Indeed, the potential $V$ is independent of $y$, so $P_V$ and $\dl_y^2$ commutes and we can look for joint eigenfunctions of the two operators
(see \cite{ballentine2014quantum,comm}). The eigenfunctions of $\dl_y^2$ have the form $\varphi(x)e^{iky}$ with $k\in\mathbb Z$, so there exist $k\in\mathbb Z^*$
(the case $k=0$ is excluded since we work in $L^2_0(X)$) such that $\varphi_k(x)e^{iky}$ is an eigenfunction of $P_V$. Substituting in the eigenvalue equation,
this implies that $\varphi(x)$ is an eigenfunction of the one dimensional Schr\"odinger operator $P_V^k$.
Then $\varphi(x)=\varphi_{k,j}(x)$ corresponds to the $j^{th}$ eigenvalue of $P_V^k$ for some $j\in\mathbb N$.
For a fixed $k$, we can choose a family of orthonormal eigenfunctions $\{\varphi_{k,j}\}_{j\in\mathbb N}$ that form a Hilbert basis in $\mathcal{H}$.

It is straightforward that the functions $\{(x,y)\mapsto \varphi_{k,j}(x)e^{iky}\}_{k\in\mathbb Z^*,~j\in \mathbb N}$ is a Hilbert eigenbasis
for $L^2(\mathbb R\times \mathbb S^1)$, and so they cover all the eigenvalues of $P_V$. Applying this into the eigenvalue equation, we get that 
\begin{equation}
\label{uytuyt}\text{spec}(P_V)=\bigcup_{k\in\mathbb{Z}^*}\text{spec}(P_V^k).
\end{equation}
This method is called separation of variables and works for any operator $P_V$ such that the corresponding Schrödinger operators $P_V^k$ have compact resolvent.

We denote by $\varphi_{k,\ell}(x)$ an eigenfunction corresponding to the $\ell$-th eigenvalue of $P_V^k$. Since $P_V^k=P_V^{-k},~\varphi_{k,\ell}(x)e^{iky}$ and $\varphi_{k,\ell}(x)e^{-iky}$
are both eigenfunctions that correspond to the same eigenvalue of $P_V$ and so the multiplicity of any eigenvalue of $P_V$ is even (we recall that $k\neq 0$).

\begin{remark}
  \label{hl2bktba} Using this relation between $P_V$ and $P_V^k$, one can deduce that $P_V$ with domain $C_0^\mathcal{1}(X)$ is essentially selfadjoint and that
  the unique selfadjoint extension has compact resolvent.
\end{remark}

A sufficient condition for $P_V$ to satisfy the concentration inequality is given in the following proposition. 
\begin{theorem}\label{conv}
   Suppose that all the eigenvalues of $P_V$ are of multiplicity 2. Then the concentration inequality holds true for $w=\R\times ]a,b[:$
  \[
    \exists C>0,~\forall E \geq 0,~\forall \phi \in \ker(P_V-E),~~ \|\phi\|_{L^2(X)}\,\leq\, C \|\phi \|_{L^2(w)}.
  \]
  \end{theorem}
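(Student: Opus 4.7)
The plan is to combine the separation of variables established earlier with the multiplicity hypothesis to get a very explicit form for eigenfunctions, and then reduce the concentration inequality to a one-variable estimate in $y$ that is uniform in $k\in\mathbb Z^*$.

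\medskip

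\noindent\textbf{Step 1: reduction to a two-dimensional eigenspace.} By the discussion preceding the theorem, a Hilbert basis of eigenfunctions of $P_V$ is given by $\varphi_{k,\ell}(x)e^{iky}$ for $k\in\mathbb Z^*$ and $\ell\in\mathbb N$, with associated eigenvalue $\lambda_{k,\ell}$ of $P_V^k=P_V^{-k}$; moreover, for each $k$ the eigenvalues of $P_V^k$ are simple so we may choose $\varphi_{-k,\ell}=\varphi_{k,\ell}$. The multiplicity 2 hypothesis then says that for every eigenvalue $E$ of $P_V$ there is a unique pair $(|k|,\ell)$ with $\lambda_{k,\ell}=E$, and $\ker(P_V-E)$ is spanned by $\varphi_{k,\ell}(x)e^{iky}$ and $\varphi_{k,\ell}(x)e^{-iky}$, where we fix $k>0$. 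Hence any $\phi\in\ker(P_V-E)$ can be written
\[
\phi(x,y)=\varphi_{k,\ell}(x)\bigl(\alpha e^{iky}+\beta e^{-iky}\bigr),
\]
for some $\alpha,\beta\in\mathbb C$.

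\medskip

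\noindent\textbf{Step 2: factoring the norms.} By Fubini, the $x$-integral $\int_{\mathbb R}|\varphi_{k,\ell}(x)|^2\,dx$ factors out of both $\|\phi\|_{L^2(X)}^2$ and $\|\phi\|_{L^2(w)}^2$ and cancels. Using that $e^{iky}$ and $e^{-iky}$ are orthogonal on $\mathbb S^1$ (as $k\neq 0$), we get $\|\phi\|_{L^2(X)}^2 = 2\pi(|\alpha|^2+|\beta|^2)\int_{\mathbb R}|\varphi_{k,\ell}|^2\,dx$, while $\|\phi\|_{L^2(w)}^2$ is the same up to replacing $2\pi(|\alpha|^2+|\beta|^2)$ by $\int_a^b|\alpha e^{iky}+\beta e^{-iky}|^2\,dy$. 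The theorem is thus equivalent to the uniform (in $k\in\mathbb Z^*$, $\alpha,\beta\in\mathbb C$) lower bound
\[
I_k(\alpha,\beta):=\int_a^b |\alpha e^{iky}+\beta e^{-iky}|^2\,dy \;\geq\; c\,(|\alpha|^2+|\beta|^2),
\]
for some $c>0$ independent of $k$, $\alpha$, $\beta$.

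\medskip

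\noindent\textbf{Step 3: the uniform lower bound (the main point).} Expanding, one finds
\[
I_k(\alpha,\beta)=(b-a)(|\alpha|^2+|\beta|^2)+2\mathrm{Re}\!\left(\alpha\bar\beta\int_a^b e^{2iky}\,dy\right),
\]
and the remainder integral has modulus at most $1/|k|$. Since $2|\alpha||\beta|\leq|\alpha|^2+|\beta|^2$, this yields $I_k(\alpha,\beta)\geq(|\alpha|^2+|\beta|^2)\bigl(b-a-1/|k|\bigr)$, which gives the desired bound with $c=(b-a)/2$ as soon as $|k|\geq K_0:=\lceil 2/(b-a)\rceil$. For the finitely many remaining $k\in\{\pm 1,\dots,\pm(K_0-1)\}$, the two functions $e^{iky}$ and $e^{-iky}$ are linearly independent on the interval $]a,b[$, so the quadratic form $I_k$ is positive definite on $\mathbb C^2$; compactness of the unit sphere furnishes a constant $c_k>0$, and one takes the minimum of the finitely many $c_k$ together with $(b-a)/2$.

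\medskip

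The main (mild) obstacle is the small-$|k|$ regime, where the oscillation argument gives no information and one must rely on the linear independence of $e^{\pm iky}$ on $]a,b[$ through a compactness argument; everything else is an explicit Fubini computation enabled by the multiplicity 2 hypothesis.
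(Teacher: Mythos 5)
Your argument is correct and follows the same separation-of-variables strategy as the paper: write an eigenfunction as $\varphi_{k,\ell}(x)(\alpha e^{iky}+\beta e^{-iky})$, observe that the $x$-integral of $|\varphi_{k,\ell}|^2$ cancels, and reduce the concentration inequality to a bound on the $y$-integral depending only on $k,\alpha,\beta$. The computations agree: the paper's $\kappa_1+\kappa_2$ is $2(|\alpha|^2+|\beta|^2)$ and its two oscillatory terms are exactly your $2\Re\bigl(\alpha\bar\beta\int_a^b e^{2iky}\,dy\bigr)$, written in real and imaginary parts.

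Where you differ is in the packaging of the conclusion, and your version is in fact tighter. The paper establishes that the minimum of the ratio converges to $(b-a)/(2\pi)$ \emph{as $k\to\infty$} and then runs a contradiction argument, writing in \eqref{kentlemm} that the limit as $E\to\infty$ equals the limit as $k\to\infty$. This identification is delicate: the minimum over the eigenspace depends only on the $k$ attached to $E$, and $E\to\infty$ does not force $k\to\infty$ (for instance the eigenvalues $\lambda_{1,\ell}$ all carry $k=1$), so the limit over $E$ does not exist as stated. The paper's contradiction argument can be repaired by the case split you implicitly perform ($k_n$ bounded versus $k_n\to\infty$), but your direct route — prove the uniform lower bound $I_k(\alpha,\beta)\geq c(|\alpha|^2+|\beta|^2)$ for \emph{all} $k\in\mathbb Z^*$ by handling $|k|\geq K_0$ with the explicit estimate and the finitely many $|k|<K_0$ by linear independence plus compactness — sidesteps the issue entirely and gives the constant $C$ explicitly. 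The two ingredients (oscillatory decay for large $k$, positivity for fixed $k$ by linear independence of $e^{\pm iky}$ on $]a,b[$) are the same as in the paper; you simply assemble them into a single uniform bound rather than a limit-plus-contradiction argument.
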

     \begin{proof}
       Let $E$ be an eigenvalue of $P_V$. If $\text{mult}(E)=2$, then any eigenfunction of $E$ can be written as
       $$\phi(x,y)=\alpha\varphi_{k,\ell}(x)e^{iky}+\beta\varphi_{k,\ell}(x)e^{-iky}=\varphi_{k,\ell}(x)(\alpha e^{iky}+\beta e^{-iky}),$$
       for some $\ell$ (see the paragraph before this proposition).  \\
       We will explicitly compute $\norm{\phi}_{L^2(w)}^2$. First, write
       $$\alpha=\alpha_0+i\alpha_1 \text{ and } \beta=\beta_0+i\beta_1,\spa\alpha_0,\alpha_1,\beta_0,\beta_1\in\mathbb R.$$
           Observe that $\phi(x,y)$ has the following expression:
           \begin{equation*}
    \begin{split}
      \phi(x,y)&=\varphi_{k,\ell}k(x)\left[(\alpha_0+i\alpha_1)(\cos(ky)+i\sin(ky))+(\beta_0+i\beta_1)(\cos(ky)-i\sin(ky))\right]\\
      &= \varphi_{k,\ell}(x)[(\alpha_0+\beta_0)\cos(ky)+(\beta_1-\alpha_1)\sin(ky)\\
      &+i\left((\alpha_1+\beta_1)\cos(ky)+(\alpha_0-\beta_0)\sin(ky)\right)].
    \end{split}
  \end{equation*}
  
Now, direct computation for the modulus of $\phi$ gives that \begin{equation*}
        |\phi(x)|^2=\Re(\phi(x))^2+\Im(\phi(x))^2=|\varphi_{k,\ell}(x)|^2\left[\kappa_1 \cos^2(ky)+\kappa_2\sin^2(ky)+2\kappa_3\cos(ky)\sin(ky)\right],
\end{equation*}
where $\kappa_1=(\alpha_0+\beta_0)^2+(\alpha_1+\beta_1)^2$, $\kappa_2=(\alpha_0-\beta_0)^2+(\alpha_1-\beta_1)^2$, and $$\kappa_3=(\alpha_0+\beta_0)(-\alpha_1+\beta_1)+(\alpha_0-\beta_0)(\alpha_1+\beta_1)=2(\alpha_0\beta_1-\alpha_1\beta_0).$$
We compute \begin{equation*}
    \begin{split}
        \norm{\phi}_{L^2(w)}^2&= \norm{\varphi_{k,\ell}}^2_{\mathcal{H}} \left[\kappa_1\int_a^b \cos^2(ky)dy+\kappa_2\int_a^b \sin^2(ky)dy+2\kappa_3\int_a^b \cos(ky)\sin(ky)dy\right]\\&=  \norm{\varphi_{k,\ell}}^2_{\mathcal{H}} \left[(\kappa_1-\kappa_2)\dfrac{f(k)}{2k}+(\kappa_1+\kappa_2)\left(\dfrac{b-a}{2}\right)+\kappa_3\dfrac{g(k)}{k} \right],
    \end{split}
\end{equation*}
with $f(k)=\sin(2bk)-\sin(2ak)$  and  $g(k)=\cos^2(ak)-\cos^2(bk).$ \\ Taking $a=-\pi$ and $b=\pi$, we deduce that
$$  \norm{\phi}_{L^2(X)}^2=\pi\norm{\varphi_{k,\ell}}^2_{\mathcal{H}} (\kappa_1+\kappa_2).$$
So, we get that, 
\begin{equation}
  \label{ghrinnn} \dfrac{\norm{\phi}_{L^2(w)}^2}{\norm{\phi}_{L^2(X)}^2}=
  \dfrac{\kappa_1-\kappa_2}{\kappa_1+\kappa_2}\dfrac{f(k)}{2\pi k}+\dfrac{b-a}{2\pi}+\dfrac{\kappa_3}{\kappa_1+\kappa_2}\dfrac{g(k)}{\pi k}.
\end{equation}
Note that the constants $\kappa_1,\kappa_2$ and $\kappa_3$ depend on $k$. The functions $f(k)$ and $g(k)$ are functions of sine and cosine so they are bounded.
Also, the term $|\frac{\kappa_1-\kappa_2}{\kappa_1+\kappa_2}|$ is bounded above by $1$ and so, the first term of (\ref{ghrinnn}) converges to $0$ as
$k\rightarrow\mathcal{1}$. \\
Now, observe that
$$\kappa_1+\kappa_2=2(\alpha_0^2+\beta_0^2+\alpha_1^2+\beta_1^2).$$
Then, we have
\begin{equation}
            \label{le3tarafla} \left|\dfrac{\kappa_3}{\kappa_1+\kappa_2}\right|\leq \dfrac{1}{2}
          \end{equation}
          Indeed,  we explicitly write
          $$\dfrac{\kappa_3}{\kappa_1+\kappa_2}=\dfrac{2(\alpha_0\beta_1-\alpha_1\beta_0)}{2(\alpha_0^2+\beta_0^2+\alpha_1^2+\beta_1^2)},$$
          and observe that, as 
$$(\alpha_0^2+\beta_0^2+\alpha_1^2+\beta_1^2)-2(\alpha_0\beta_1-\alpha_1\beta_0)=(\alpha_0-\beta_1)^2+(\alpha_1+\beta_0)^2\geq 0$$ 
we get that $$2(\alpha_0\beta_1-\alpha_1\beta_0)\leq \alpha_0^2+\beta_0^2+\alpha_1^2+\beta_1^2.$$ 
A similar computation holds for $\dfrac{\kappa_3}{\kappa_1+\kappa_2}$.
 
This implies (\ref{le3tarafla}).
      Thus, the last term of (\ref{ghrinnn}) converges to 0 as $k\rightarrow \mathcal{1}$. We deduce that
    \begin{equation}\label{kentlemm}
        \begin{split}
            \lim_{E\rightarrow\mathcal{1}}\left[\min_{0\neq\phi\in \text{ker}(P_V-E)}\left(\dfrac{\norm{\phi}_{L^2(w)}^2}{\norm{\phi}_{L^2(X)}^2}\right)\right]&=\lim_{k\rightarrow\mathcal{1}}\left[\min_{\alpha,\beta}\left(\dfrac{\kappa_1-\kappa_2}{\kappa_1+\kappa_2}\dfrac{f(k)}{2\pi k}+\dfrac{b-a}{2\pi}+\dfrac{\kappa_3}{\kappa_1+\kappa_2}\dfrac{g(k)}{\pi k}\right)\right]\\&=\dfrac{b-a}{2\pi}>0.
        \end{split}
    \end{equation}
It remains to prove that (\ref{kentlemm}) implies that the concentration inequality holds true. Suppose, to contrary, that the concentration inequality 
doesn't hold, that is, for all $c>0$, there exist $E\in\text{spec}(P_V)$ and $0\neq\phi\in \text{ker}(P_V-E)$ such that 
$c\norm{\phi}_{L^2(w)}^2<{\norm{\phi}_{L^2(X)}^2}$. Taking $c=c_n=n$, this implies that there exist a sequence of eigenvalues 
$(E_n)_{n\in\mathbb N}$ and a sequence of corresponding eigenfunctions $(\phi_n)_{n\in\mathbb N}$ of $P_V$ such that 
$$\min_{0\neq\phi\in \text{ker}(P_V-E_n)}\left(\dfrac{\norm{\phi}_{L^2(w)}^2}{\norm{\phi}_{L^2(X)}^2}\right)\leq 
\dfrac{\norm{\phi_n}_{L^2(w)}^2}{\norm{\phi_n}_{L^2(X)}^2}\leq \frac{1}{n}.$$ 

We then observe that 
\[
\forall n,~\min_{0\neq\phi\in \text{ker}(P_V-E_n)}\left(\dfrac{\norm{\phi}_{L^2(w)}^2}{\norm{\phi}_{L^2(X)}^2}\right) > 0.
\]
Indeed the previous computation tells us that the latter minimum coincides with 
\[
\min \left\{ \dfrac{\|u\|^2_{L^2(]a,b[)}}{\| u\|_{L^2(]-\pi,\pi[)}},~~u\,=\,\alpha e^{ik\cdot}\,+\,\beta e^{-ik\cdot}\right\}
\]
and this quantity is positive since a linear combination of $e^{ik\cdot}$ and $e^{-ik\cdot}$ cannot vanish on the interval $]a,b[$.

We deduce that,  necessarily, $E_n\rightarrow\mathcal{1}$ when $n\rightarrow\mathcal{1}$.
We get that 
$$\lim_{E_n\rightarrow\mathcal{1}}\left[\min_{0\neq\phi\in \text{ker}(P_V-E_n)}\left(\dfrac{\norm{\phi}_{L^2(w)}^2}{\norm{\phi}_{L^2(X)}^2}\right)\right]=0,$$
which contradicts (\ref{kentlemm}).
         \end{proof}

The preceding proposition depends on separating the variables of eigenfunctions of the operator and not really on the 
particular form of Baouendi Grushin operators so it works for any operator with similar properties.  

The condition in theorem \ref{conv} is not true in general. For a better vision of the problem lets study first a family of operators that 
contains the simple Grushin operator.

\section{Family $P_{x^2+s^2}$}\label{soae}
In this section, we study the family $P_{x^2+s^2}$ for $s\in\R$. 

Denote by $P_s=P_{x^2+s^2}$ and by $P_s^k=P_{x^2+s^2}^k$. We investigate the eigenvalues of $P_s$; we study the multiplicities of the eigenvalues according to $s$.
\begin{proposition}\label{specps}
The spectrum of $P_s$ is given by the set \begin{equation}
    \label{hymfrudtkld}\mathrm{spec}(P_s)=\{E^s_{k,n}=(2n+1)|k|+k^2s^2; n\in\mathbb N, k\in\mathbb Z^*\}.
\end{equation}  An orthonormal basis of eigenfunction corresponding to $E^s_{k,n}$ is given by $\phi_{k,n}(x,y)=\varphi_{k,n}(x)e^{iky},$ with $$\varphi_{k,n}(x)=|k|^{1/4}H^n\left(x\sqrt{|k|}\right) e^{\frac{-x^2|k|}{2}}e^{iky},$$ where $H^n$ is the Hermite polynomial of degree $n$.
\end{proposition}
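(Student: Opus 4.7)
The plan is straightforward: use separation of variables to reduce to a family of one-dimensional problems, then recognize each one-dimensional operator as a rescaled quantum harmonic oscillator. By \eqref{uytuyt} (and the joint diagonalization argument established earlier in Section 3), it suffices to compute the spectrum and an orthonormal eigenbasis of $P_s^k = -\dl_x^2 + k^2(x^2+s^2)$ for each $k \in \Z^*$. Then the corresponding functions $\varphi_{k,n}(x)e^{iky}$ will assemble, as $k$ and $n$ vary, into a Hilbert basis of $L^2_0(X)$ diagonalizing $P_s$.

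For each fixed $k \in \Z^*$, I would write
\[
P_s^k \,=\, \bigl(-\dl_x^2 + k^2 x^2\bigr) \,+\, k^2 s^2\,\mathrm{Id},
\]
reducing the problem to identifying the spectrum of the harmonic oscillator $H_k = -\dl_x^2 + k^2 x^2$. Next I would perform the rescaling $u = \sqrt{|k|}\,x$, which conjugates $H_k$ via a unitary transformation on $L^2(\R)$ to $|k|(-\dl_u^2 + u^2)$. The classical spectral theory of the standard harmonic oscillator on $L^2(\R)$ gives discrete simple spectrum $\{2n+1 : n\in\mathbb N\}$ with unit-norm eigenfunctions proportional to the Hermite functions $H^n(u)e^{-u^2/2}$. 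Unwinding the rescaling (whose Jacobian contributes the factor $|k|^{1/4}$ needed to preserve the $L^2$-norm) produces the eigenvalues $(2n+1)|k|$ for $H_k$ with eigenfunctions proportional to $|k|^{1/4} H^n(\sqrt{|k|}\,x)e^{-|k|x^2/2}$. Adding back the constant $k^2 s^2$ yields the announced $E^s_{k,n}$ and the announced $\varphi_{k,n}$.

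There is no substantive obstacle; the argument is essentially a rescaling of a classical computation. The only items requiring a bit of care are fixing the Hermite polynomial normalization so that the resulting functions have unit $L^2$-norm (which fixes the prefactor $|k|^{1/4}$), and verifying that we have enumerated the full spectrum of each $P_s^k$. The latter follows from the fact that the Hermite functions form a Hilbert basis of $L^2(\R)$ together with the compact resolvent property of $P_s^k$ recalled in Section 2.2, after which \eqref{uytuyt} and the discussion preceding Remark \ref{hl2bktba} give the complete spectrum of $P_s$.
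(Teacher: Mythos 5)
Your proposal is correct and follows essentially the same route as the paper's proof: separation of variables via \eqref{uytuyt}, splitting $P_s^k$ into the harmonic oscillator $-\dl_x^2+k^2x^2$ plus the constant $k^2s^2$, and invoking the standard Hermite spectrum. The only difference is that you spell out the $u=\sqrt{|k|}\,x$ rescaling explicitly, where the paper simply cites references for the oscillator's spectrum; this is a presentational choice, not a different argument.
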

\begin{proof}
Let  $\phi(x,y)=\varphi_k(x)e^{iky}$, $k\in\mathbb{Z}^*$, be an eigenfunction of $P_s$ and let $E^s$ be its corresponding eigenvalue. The eigenvalue equation $$P_s\phi=E^s\phi$$ implies that $E^s=E^s_k$ is an eigenvalue of $P_s^k$ with a corresponding eigenfunction $\varphi_k$.\\ Observe that, the eigenvalue equation implies that $E^s_k=E^0_k+k^2s^2$, where $E^0_k$ is an eigenvalue of the 1-D harmonic oscillator $-\dl_x^2+k^2x^2$, with corresponding eigenfunction $\varphi_k$. The spectrum of the harmonic oscillator is well known and given by $\{(2n+1)|k|; n\in\mathbb N,k\in\mathbb Z^*\}$. Moreover, the function $\varphi_{k,n}$ given by \begin{equation}
    \label{mpmp}\varphi_{k,n}(x)=|k|^{1/4}H^n(x\sqrt{|k|}) e^{\frac{-x^2|k|}{2}},
\end{equation} is an eigenfunction corresponding to $(2n+1)|k|$. Refer to \cite[Chapter~11]{hislop2012introduction} or \cite[Chapter~6]{zworski2022semiclassical} for details about the harmonic oscillator. \\Then, for every $k\in\mathbb{Z}^*$, the $n^{th}$ eigenvalue of $P_s^k$ is $E^s_{k,n}=(2n+1)|k|+k^2s^2$, with a corresponding eigenfunction given by (\ref{mpmp}).

Therefore, the spectrum of $P_s$ is given by (\ref{hymfrudtkld}), with a set of corresponding eigenvectors $\{\varphi_{k,n}(x)e^{iky}\}_{k\in \mathbb Z^*}$. Since these eigenfunctions span the space $L^2_0(X),$ they cover all the eigenvalues of $P_s$. We conclude.
\end{proof}
To study multiplicity of eigenvalues, it is usually helpful to study the Weyl law, which will be described here by studying the asymptotic behaviour of the counting function. The counting function $N_{P_s}$ takes a positive real number and counts the number of eigenvalues of $P_s$ less than or equal to this number. In other words, we can write for $E>0$, $$N_{P_s}(E)=\sum_{E^s_{k,n}\leq E}1,$$ where the sum is taken over the eigenvalues $E^s_{k,n}$ of $P_s$.
\begin{proposition}\label{wayllow}[Weyl Law] The following assertions hold true. 
    \begin{enumerate}
    \item  For $s=0$,  $N_{P_0}(E)\,=\, E\ln(E)\,+\,O(E)$ at infinity.
    \item For $s\neq0$, $ N_{P_s}(E)\,=\,E\ln(\sqrt{E})\,+\,O(E)$ at infinity.
\end{enumerate} 
\end{proposition}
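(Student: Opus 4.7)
The plan is to directly enumerate the eigenvalues using the explicit description provided by Proposition \ref{specps}, namely $\mathrm{spec}(P_s)=\{(2n+1)|k|+k^2s^2:n\in\mathbb{N},\,k\in\mathbb{Z}^*\}$, and then estimate the resulting lattice-point count via standard partial-sum asymptotics (harmonic sums and arithmetic progressions). Using the symmetry $k\leftrightarrow -k$, I would write
\[
N_{P_s}(E)\,=\,2\sum_{k\geq 1}\#\bigl\{n\geq 0:\,(2n+1)k+k^{2}s^{2}\leq E\bigr\},
\]
with the convention that the inner set is empty as soon as $k^{2}s^{2}>E$. For each admissible $k$, the inner cardinality equals $\left\lfloor \tfrac{E-k^{2}s^{2}+k}{2k}\right\rfloor$, which differs from $\tfrac{E-k^{2}s^{2}}{2k}$ by at most $1$.

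For case (1), $s=0$, the outer sum runs over $1\leq k\leq E$ and yields
\[
N_{P_0}(E)\,=\,2\sum_{k=1}^{\lfloor E\rfloor}\Bigl(\tfrac{E}{2k}+O(1)\Bigr)\,=\,E\sum_{k=1}^{\lfloor E\rfloor}\tfrac{1}{k}+O(E)\,=\,E\ln E+O(E),
\]
using the classical estimate $\sum_{k\leq N}1/k=\ln N+O(1)$. For case (2), $s\neq 0$, the admissible range is $1\leq k\leq K:=\lfloor\sqrt{E}/|s|\rfloor$ and I would split
\[
N_{P_s}(E)\,=\,E\sum_{k=1}^{K}\tfrac{1}{k}\,-\,s^{2}\sum_{k=1}^{K}k\,+\,O(K).
\]
The first sum equals $\ln K+O(1)=\tfrac{1}{2}\ln E+O(1)$; the second is $\tfrac{s^{2}K(K+1)}{2}=O(E)$; and $K=O(\sqrt{E})=O(E)$. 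Combining, $N_{P_s}(E)=E\ln(\sqrt{E})+O(E)$.

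I do not anticipate a serious obstacle: both assertions reduce to elementary partial-sum asymptotics once the explicit spectral formula is in hand. The only point that requires a bit of attention is bookkeeping the $O(1)$ error per $k$ (which accumulates to $O(E)$ in case (1) and $O(\sqrt{E})$ in case (2)) and checking that the boundary effect coming from the condition $k^{2}s^{2}\leq E$ in case (2) is absorbed in the remainder. These are routine and fit within the announced $O(E)$ error.
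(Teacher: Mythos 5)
Your proposal is correct and follows essentially the same route as the paper: both exploit the explicit formula $\mathrm{spec}(P_s)=\{(2n+1)|k|+k^2s^2\}$ from Proposition~\ref{specps}, reduce the count to a sum over $k$ of floor/ceiling expressions, truncate at $k\lesssim \sqrt{E}/|s|$ when $s\neq 0$, and invoke the harmonic-sum asymptotic $\sum_{k\leq N}1/k=\ln N+O(1)$ together with $\sum_{k\leq K}k=O(K^2)$ to absorb everything else into the $O(E)$ error. The only cosmetic difference is that you write the inner count as a floor $\lfloor(E-k^2s^2+k)/(2k)\rfloor$ while the paper uses a ceiling $\lceil(E-k^2s^2)/(2k)\rceil$; both are within $O(1)$ per term and give the same leading asymptotics.
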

\begin{proof} Denote by $\lceil .\rceil$ the ceiling function which takes a real number and gives the first integer greater than or equal to this number. Denote by  $\lfloor.\rfloor$ the  floor function which takes a real number and gives the first integer less than or equal to this number
 \begin{enumerate}
    \item 
We compute  
\begin{equation*}
  \begin{split}
   N_{P_0}(E)&=\text{card}\{(n,k)\in\mathbb{N}\times\mathbb{Z}^*/(2n+1)|k|\leq E\}\\&=2\text{card}\{(n,k)\in\mathbb{N}\times\mathbb{N}^*/(2n+1)|k|\leq E\}\\&=2\sum_{0<k\leq E}{\text{card}\left\{n\in\mathbb{N}/(2n+1)\leq\dfrac{E}{k}\right\}}\\&=2\sum_{0<k\leq E}{\left\lceil\dfrac{E}{2k}\right\rceil},
  \end{split}  
\end{equation*}
Now, since $$\dfrac{E}{2k}\leq\left\lceil\dfrac{E}{2k}\right\rceil\leq\dfrac{E}{2k}+1,$$ then $$E\sum_{0<k\leq E}{\dfrac{1}{k}}\leq2\sum_{0<k\leq E}{\left\lceil\dfrac{E}{2k}\right\rceil}\leq E\sum_{0<k\leq E}{\dfrac{1}{k}}+2E.$$
As $E\rightarrow\mathcal{1}$, $$\sum_{0< k\leq E}{\dfrac{1}{k}}\,=\,\ln(E)\,+\,O(1).$$
Since $E$ is negligible at infinity compared to $E\ln(E)$, we get
  $N_{P_0}(E)\,=\, E\ln(E)\,+\,O(E)$.
\item  We set $\alpha=min\left(E,\dfrac{\sqrt{E}}{s}\right),$ and we compute 
\begin{equation*}
  \begin{split}
   N_{P_s}(E)&=\text{card}\{(n,k)\in\mathbb{N}\times\mathbb{Z}^*/(2n+1)|k|+k^2 s^2\leq E\}\\&=2\text{card}\{(n,k)\in\mathbb{N}\times\mathbb{N}^*/(2n+1)|k|+k^2 s^2\leq E\}\\&=2\sum_{0<k\leq \alpha}{\text{card}\left\{n\in\mathbb{N}/(2n+1)\leq\dfrac{E-k^2 s^2}{k}\right\}}\\&=2\sum_{0<k\leq \alpha}{\left\lceil\dfrac{E-k^2 s^2}{2k}\right\rceil}. 
  \end{split}  
\end{equation*}
Since $$\dfrac{E-k^2 s^2}{2k}\leq\left\lceil\dfrac{E-k^2 s^2}{2k}\right\rceil\leq\dfrac{E-k^2 s^2}{2k}+1,$$ then 
\begin{equation*}
    \begin{split}
        E\left(\sum_{0<k\leq \alpha}{\dfrac{1}{k}}\right)-s^2\left(\sum_{0<k\leq \alpha}{k}\right)&\leq2\sum_{0<k\leq \alpha}{\left\lceil\dfrac{E-k^2 s^2}{2k}\right\rceil}\\&\leq E\left(\sum_{0<k\leq \alpha}{\dfrac{1}{k}}\right)-s^2\left(\sum_{0<k\leq \alpha}{k}\right)+2E.
    \end{split}
\end{equation*}
Finally, we get 
\begin{equation*}
    \begin{split}
        E\left(\sum_{0<k\leq \alpha}{\dfrac{1}{k}}\right)-\dfrac{s^2\lfloor\alpha\rfloor(\lfloor\alpha\rfloor+1)}{2}&\leq2\sum_{0<k\leq \alpha}{\left\lceil\dfrac{E-k^2 s^2}{2k}\right\rceil}\\&\leq E\left(\sum_{0<k\leq \alpha}{\dfrac{1}{k}}\right)-\dfrac{s^2\lfloor\alpha\rfloor(\lfloor\alpha\rfloor+1)}{2}+2E.
    \end{split}
\end{equation*}
As $E\rightarrow\mathcal{1},$ we have $\alpha\,=\,\frac{\sqrt{E}}{s},$ and so we get 
  $N_{P_s}(E)\,=\,E\ln(\sqrt{E})\,+\,O(E).$
\end{enumerate}
\end{proof}
\begin{corollary}\label{unifunbdd}
  If $s$ is fixed such that $s^2$ is rational, then the multiplicity of $P_{s}$ is not uniformly bounded. 
  \end{corollary}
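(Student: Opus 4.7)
The plan is to combine the super-linear growth of the counting function furnished by the Weyl law (Proposition \ref{wayllow}) with the discreteness that rationality of $s^2$ forces on the spectrum.

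First, I would write $s^2=p/q$ with $p\in\mathbb{N}$ and $q\in\mathbb{N}^*$ (which covers $s=0$ by taking $p=0$, $q=1$). From Proposition \ref{specps}, every eigenvalue has the form
$$E^s_{k,n}=(2n+1)|k|+k^2s^2,$$
so $qE^s_{k,n}\in\mathbb{N}$. Hence the set of distinct eigenvalues of $P_s$ is contained in $\frac{1}{q}\mathbb{N}$, and in particular the number of distinct eigenvalues lying in $[0,E]$ is at most $qE+1$.

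Next, I would argue by contradiction: suppose there exists $C>0$ such that every eigenvalue of $P_s$ has multiplicity at most $C$. The counting function $N_{P_s}(E)$ counts eigenvalues \emph{with} multiplicity, because its defining sum runs over pairs $(k,n)\in\mathbb{Z}^*\times\mathbb{N}$ and each such pair contributes an element of the orthonormal eigenbasis given in Proposition \ref{specps}. Combining the bound on the number of distinct eigenvalues with the hypothetical uniform multiplicity bound yields
$$N_{P_s}(E)\leq C(qE+1)=O(E).$$
This contradicts Proposition \ref{wayllow}: when $s=0$ one has $N_{P_0}(E)=E\ln(E)+O(E)$, and when $s\neq 0$ one has $N_{P_s}(E)=E\ln(\sqrt{E})+O(E)$, both of which are strictly super-linear. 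The contradiction forces the multiplicities of $P_s$ to be unbounded.

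There is essentially no technical obstacle here; the crux is the elementary but decisive observation that rationality of $s^2$ pins every eigenvalue into the arithmetic progression $\frac{1}{q}\mathbb{N}$, so that the extra logarithmic factor appearing in the Weyl law must be absorbed entirely by the multiplicities.
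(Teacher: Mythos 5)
Your proof is correct, and it streamlines the paper's argument. The paper splits into two cases: for $s=0$ it bypasses the Weyl law entirely and computes the multiplicity of each eigenvalue $E^0_{k,n}$ explicitly via a prime-factorization argument (formula~(\ref{mults=0})), which directly exhibits unboundedness and has the side benefit of giving an exact multiplicity formula for the Grushin operator $P_0$; for $s^2=p/q$ with $p\neq 0$, the paper observes $\mathrm{spec}(P_s)\subset\frac{1}{q}\mathbb{Z}$ and compares the bound $N_{P_s}(2E)-N_{P_s}(E)\leq MqE$ against the Weyl asymptotic $N_{P_s}(2E)-N_{P_s}(E)=E\ln\sqrt{E}+O(E)$. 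You instead unify both cases: writing $s^2=p/q$ with $q=1$ allowed when $s=0$, the inclusion $\mathrm{spec}(P_s)\subset\frac{1}{q}\mathbb{N}$ plus a uniform multiplicity bound gives $N_{P_s}(E)\leq C(qE+1)=O(E)$, which contradicts the super-linear growth in \emph{both} branches of Proposition~\ref{wayllow}. Your use of $N_{P_s}(E)$ rather than the dyadic difference $N_{P_s}(2E)-N_{P_s}(E)$ is an inessential variant; both control the relevant count linearly. What you gain is a shorter, case-free argument; what you lose relative to the paper is the explicit multiplicity formula at $s=0$, which the authors evidently wanted to record on its own merits.
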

    \begin{proof}
    If $s^2=0$, we write the prime factorisation $E^0_{k,n}=2^{k_0}p_1^{\alpha_1}...p_r^{\alpha_r}$ for an eigenvalue $E^0_{k,n}$. With the convention that $\sum_1^0=\sum_1^{-1}=0,$ we have 
    \begin{equation}
        \label{mults=0}\text{mult}(E)=2\left[\sum_{i=1}^r{\alpha_i}+\sum_{j=1}^{r-1}{\alpha_j\left(\sum_{k=j+1}^r{\alpha_k}\right)}+1\right].
    \end{equation}
   Indeed, for $s=0$, the eigenvalues are of the form $(2n+1)|k|$ with $n\in\mathbb N$ and $k\in\mathbb Z$. The factor $2$ outside the brackets is because of the fact that $E^0_{k,n}=E^0_{-k,n}$. Now, every prime number but $2$ (and that's why we distinguished $2$) is an odd integer, and the product of two odd integers is odd. So, $$\text{ the term $2n+1$ can be $\prod_{i=1}^r p_i^{j_i}$ for any } 0\leq j_i\leq\alpha_i.$$ The first sum on the right hand side of (\ref{mults=0}) represents the number of the cases with $j_i=0$ for all $i$ but one. The second term of the right hand side covers the number of the cases where $j_i$ is not zero at least for two $i's$, and $k$ is not $2^{k_0}$. Finally, the $1$ is for the case where $k=2^{k_0}$ ($j_i=\alpha_i$ for all $i$). This covers all the cases. Formula (\ref{mults=0}) implies that for $s=0$, the multiplicity is not bounded.\\ 

   Take now $s^2=\frac{p}{q}$, with $1$ as the greatest common factor of $p$ and $q$ and $p\neq 0$. Then, we have  $$\text{spec}(P_s)=\{(2n+1)|k|+k^2s^2,(n,k)\in\mathbb{N}\times\mathbb{Z}^{*}\}\subset\{\alpha+\beta s^2, \alpha,\beta\in\mathbb{Z}\}\subset\dfrac{1}{q}\mathbb{Z}.$$ Assume, for a contradiction, that the mutliplicity is bounded above by some $M$.
   Then, for any $E$, and since the spectrum is a subset of $\dfrac{1}{q}\mathbb{Z}$, we have
   \[
     N_{P_s}(2E)-N_{P_s}(E) \leq MqE.
   \]
   But the previous proposition implies that
   \[
     N_{P_s}(2E)-N_{P_s}(E)\,=\,E\ln \sqrt{E}\,+\,O(E).
   \]
   This yields the contradiction.
   \end{proof}
\begin{corollary}\label{poip}
    The multiplicity of the eigenvalues of the simple Grushin operator $P_{x^2}$ is not uniformly bounded.
\end{corollary}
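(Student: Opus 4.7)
The proof plan reduces to a single observation: the simple Grushin operator $P_{x^2}$ is precisely the member $P_0$ of the family $P_s = P_{x^2+s^2}$ studied in this section, since $x^2 = x^2 + 0^2$. Because $s^2 = 0 \in \mathbb{Q}$, Corollary \ref{unifunbdd} applies directly and yields that the multiplicities of the eigenvalues of $P_{x^2}$ are not uniformly bounded.

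For completeness, and because the $s=0$ case is especially transparent, I would also spell out the direct counting argument, bypassing the Weyl-law step used for general rational $s^2$. By Proposition \ref{specps} the spectrum of $P_{x^2}$ is $\{(2n+1)|k| : n \in \mathbb{N},\ k \in \mathbb{Z}^*\}$, so the multiplicity of an eigenvalue $E$ equals $2$ times the number of factorisations $E = m \ell$ with $m$ a positive odd integer and $\ell \geq 1$, that is, twice the number of odd divisors of $E$. Choosing $E = p_1 p_2 \cdots p_r$, a product of $r$ distinct odd primes, yields multiplicity $2^{r+1}$, which is unbounded as $r \to \infty$.

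No genuine obstacle arises here, since all the spectral analysis has already been done in Proposition \ref{specps} and Corollary \ref{unifunbdd}. The role of this corollary is mainly expository: it emphasises that the spectral simplicity hypothesis of Theorem \ref{conv} genuinely fails for the canonical Grushin operator, which is the motivation for the genericity result stated in Theorem \ref{brrr} and proved in the next section.
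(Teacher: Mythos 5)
Your proposal is correct and follows the paper's intended route: Corollary \ref{poip} is stated with no separate proof because it is the special case $s=0$ of Corollary \ref{unifunbdd}, exactly as you observe. Your supplementary direct argument is in fact cleaner than the paper's: you correctly identify that, since $\mathrm{spec}(P_0)=\{(2n+1)|k|\}$, the multiplicity of an eigenvalue $E$ equals twice the number of odd divisors of $E$ (each odd divisor $m$ of $E$ gives $2n+1=m$, $|k|=E/m$, and then $k=\pm|k|$), which for $E=p_1\cdots p_r$ a product of distinct odd primes yields $2^{r+1}$. By contrast, the paper's formula (\ref{mults=0}) is an attempt at the same count but appears to undercount once $r\geq 3$ (for instance with $\alpha_1=\alpha_2=\alpha_3=1$ it gives $14$ rather than the correct $2(\alpha_1+1)(\alpha_2+1)(\alpha_3+1)=16$, since it omits the triple-product and higher terms of $\prod(\alpha_i+1)$); the closed form is simply $\mathrm{mult}(E)=2\prod_{i=1}^r(\alpha_i+1)$, which is what your reasoning gives. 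This discrepancy does not affect the conclusion -- both quantities are unbounded in $r$ -- but your formulation is the more reliable one.
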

\begin{proposition}\label{mult2}
    If $s^2$ is irrational, then the eigenvalues of $P_s$ are of multiplicity 2.
    \end{proposition}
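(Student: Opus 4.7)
The plan is to use the explicit form of the spectrum from Proposition \ref{specps} together with an elementary rationality argument. Recall that $\mathrm{spec}(P_s) = \{E^s_{k,n} = (2n+1)|k| + k^2 s^2 : n \in \mathbb{N},\, k \in \mathbb{Z}^*\}$, and from the discussion preceding Remark \ref{hl2bktba}, every eigenvalue of $P_V$ has even multiplicity, since both $\varphi_{k,\ell}(x) e^{iky}$ and $\varphi_{k,\ell}(x) e^{-iky}$ correspond to the same eigenvalue. Hence each $E^s_{k,n}$ has multiplicity at least $2$, and it suffices to prove that it is not strictly larger.

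I would proceed by contradiction (or, equivalently, by direct injectivity analysis). Suppose that $E^s_{k,n} = E^s_{k',n'}$ for some pairs $(k,n),(k',n') \in \mathbb{Z}^* \times \mathbb{N}$. Since the eigenvalue depends on $k$ only through $|k|$, I would take $k, k' > 0$ without loss of generality. Then the identity reads
\[
(2n+1)k + k^2 s^2 = (2n'+1)k' + (k')^2 s^2,
\]
which rearranges to
\[
(k^2 - (k')^2)\, s^2 = (2n'+1)k' - (2n+1)k.
\]
If $k = k'$, the right-hand side forces $n = n'$, so $(k,n) = (k',n')$. Otherwise, $k^2 - (k')^2 \neq 0$ and one can solve for $s^2$ as a ratio of two integers, contradicting the assumption that $s^2$ is irrational.

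I therefore conclude that the only coincidences among the $E^s_{k,n}$ (with $k > 0$) are the trivial ones $(k,n) = (k',n')$, so that the pairs $\{(k,n), (-k,n)\}$ give the complete list of eigenfunctions sharing a given eigenvalue. This yields multiplicity exactly $2$. The argument is essentially a one-line rationality observation; the main point to be careful about is to separate the sign of $k$ from the rest of the analysis so that the factor of $2$ from the $\pm k$ symmetry is not double-counted.
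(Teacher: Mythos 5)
Your argument is correct and follows essentially the same route as the paper: starting from the explicit spectrum formula, you equate two eigenvalues with positive $k,k'$ and solve for $s^2$ as a ratio of integers to contradict irrationality. Your treatment is in fact slightly more careful than the paper's, since you handle the case $k=k'$ explicitly before dividing by $k^2-(k')^2$, whereas the paper simply asserts $k^2\neq (k')^2$ without comment.
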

    \begin{proof}
Suppose to contrary $P_s$ has an eigenvalue of multiplicity greater than $2$.
Then there exists $k,k'>0 , n,n'>0$ with $k^2\neq {k'}^2, n\neq {n'}$ such that $$(2n+1)|k|+k^2 s^2=(2n'+1)|k'|+k'^2 s^2.$$ Then, $$s^2=\dfrac{(2n'+1)|k'|-(2n+1)|k|}{k^2-k'^2},$$ which contradicts the fact that $s^2$ is irrational.
\end{proof}
Therefore, as a corollary of theorem \ref{conv}, we have
\begin{corollary}\label{propspec}
    If $s^2$ is irrational, then (\ref{ci}) holds.
\end{corollary}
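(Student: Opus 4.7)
The statement is really a two-line corollary, so my plan is simply to combine the two results already established in this section with the spectral criterion of Theorem~\ref{conv}. More precisely, for fixed $s$ with $s^2$ irrational, Proposition~\ref{mult2} asserts that every eigenvalue of $P_s = P_{x^2+s^2}$ has multiplicity exactly $2$ (we already know from the discussion preceding Remark~\ref{hl2bktba} that all multiplicities are even and at least $2$). This is precisely the spectral hypothesis that Theorem~\ref{conv} requires.

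So the plan is: invoke Proposition~\ref{mult2} to obtain the multiplicity-$2$ hypothesis, then apply Theorem~\ref{conv} to produce the constant $C>0$ such that
\[
\|\phi\|_{L^2(X)} \,\leq\, C\,\|\phi\|_{L^2(w)}
\]
for every eigenfunction $\phi$ of $P_s$ and every horizontal strip $w = \mathbb{R}\times\,]a,b[$. This is exactly the concentration inequality~(\ref{ci}) with control region $N = w$.

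The one technical point worth flagging is that $V(x) = x^2 + s^2$ is not literally of the form $|x|^{2\gamma}\tilde W$ with $\tilde W$ bounded (for $\gamma = 1$ one would need $\tilde W = 1 + s^2/x^2$, which blows up at the origin), so strictly speaking $V \notin \mathbb{V}_1$. However, as the authors emphasise immediately after the proof of Theorem~\ref{conv}, the argument there relies only on the separation of variables $\phi(x,y) = \varphi(x)e^{iky}$ and on the multiplicity-$2$ assumption — never on the specific structural form imposed on $V$ in $\mathbb{V}_\gamma$. Since the separation of variables for $P_s$ has been carried out explicitly in Proposition~\ref{specps}, Theorem~\ref{conv} applies verbatim, and no obstacle remains. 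The corollary follows.
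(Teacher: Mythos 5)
Your proof is correct and follows the paper's own route: combine Proposition~\ref{mult2} (multiplicity $2$ whenever $s^2$ is irrational) with Theorem~\ref{conv}, and note --- exactly as the paper itself remarks immediately after proving Theorem~\ref{conv} --- that only the separation-of-variables structure and the multiplicity-$2$ hypothesis are used, so the fact that $x^2+s^2$ is not literally in $\mathbb{V}_1$ is harmless. Your explicit flagging of that last point is a welcome clarification of something the paper leaves implicit by simply labelling this a corollary of Theorem~\ref{conv}.
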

To sum up, we showed that whenever $s^2$ is irrational the multiplicity of the eigenvalues of $P_s$ is 2, which implies by the spectral condition that the concentration inequality holds. Moreover, we proved that whenever $s^2$ is rational, the multiplicity is not uniformly bounded, and thus the spectral condition fails, in particular for $s=0$; the simple Grushin operator ($\gamma=1$ and $W=1$). This means that the spectral condition is not true in general. \\
This example inspires us to study the spectral condition generically for general Grushin operators.

\section{Baouendi Grushin Type}\label{secgencaspp}
In this section, we prove that the generic Baouendi-Grushin operator satisfies the multiplicity condition that we derived in section \ref{subsecoffirsch}.
The power $\gamma$ will be fixed throughout this section.

First, we introduce the convenient functional setting. We denote by $\mathcal{C}^0_b(\R)$ the set of continuous and bounded functions defined on $\R$ and define
$${\mathbb{V}_\gamma}=\{V=|x|^{2\gamma}\til W;\til W\in\mathcal{C}^0_b(\mathbb R); \til W\geq 1\}.$$
Observe that $\mathbb{V}_\gamma$ is a subset of the vector space $\{V=|x|^{2\gamma}\til W;\til W\in\mathcal{C}^0_b(\mathbb R)\}$ on which we define the following norm: for $V=|x|^{2\gamma}W\in{\mathbb{V}_\gamma}$, we set $\norm{V}_{{\mathbb{V}_\gamma}}:=\norm{W}_\mathcal{1}$.

We also define  $${\mathbb{W}_\gamma}=\{ W\in\mathcal{C}^\mathcal{1}_c(\mathbb R); \til W\geq 0\}$$
that will, loosely speaking, allow us to describe the local deformations of $\mathbb{V}_\gamma$.

Recall that $P_V$ denotes the generalized Baouendi-Grushin operator, and $P_V^k$ denotes the 1-dimensional parameter dependent schr\"odinger operator.

Let (P) be the property: $$\forall k,l\in\mathbb{Z}^*; k^2\neq l^2\Rightarrow\text{spec}(P_V^k)\cap\text{spec}(P_V^l)=\emptyset\spa \mathrm{\text{(P)}}.$$
We can see from (\ref{uytuyt}) that if (P) holds, then $\text{mult}(E)=2$ for all $E\in\text{spec}(P_V)$, thus the concentration inequality holds by
theorem \ref{conv}. In fact, the spectral condition we seek is a 'simplicity' result for the non-elliptic Baouendi Grushin operators that is not true
in complete generality (as shown in section \ref{soae}).  
Here we prove that for a generic $V$ in ${\mathbb{V}_\gamma}$, $\text{mult}(P_V)=2$. More precisely, if we denote by $\mathbb{V}_\gamma^g$ the set of the
\textit{good} $V's$ that satisfy the property (P);
then  $\mathbb{V}_\gamma^g$ is residual in $({\mathbb{V}_\gamma},\norm{.}_{{\mathbb{V}_\gamma}})$ (i.e.
dense and a countable intersection of open dense sets in ${\mathbb{V}_\gamma}$).

\subsection{Hellmann–Feynman Theorem}
We will construct a countable family of open and dense sets in ${\mathbb{V}_\gamma}$ such that $\mathbb{V}_\gamma^g$
is equal to the intersection of these sets. To prove the density, we will need at some point the Hellmann–Feynman theorem applied to the 
family $P_{V+t|x|^{2\gamma}W}^k$, where $W\in\mathbb W$ and $t>0$. An assumption in Hellmann–Feynman's theorem is for the eigenvalues and eigenfunctions 
of the perturbed operator to be analytic in $t$. Kato proved in \cite{kato2013perturbation} that the eigenquantities of operators that are holomorphic 
of type (A) (in particular $P_{V+t|x|^{2\gamma}W}^k$) are analytic (see \cite[Chapter~7]{kato2013perturbation}).  Thus, the Hellmann–Feynman theorem, 
that we know state and prove, is applicable.

\begin{lemma}[Hellmann–Feynman]\label{feyhel}
  Let $\lambda(t)$ be an eigenbranch of $P_{V+t|x|^{2\gamma}W}^k$, and denote by $u(t)$ a normalized eigenfunction branch of $\lambda(t)$. Then,
  $$\restr{\dfrac{d}{dt}}{t=0}\lambda(t) \,=\, k^2\int_{\R} |x|^{2\gamma}W(x)|u(0)(x)|^2\, dx.$$
\end{lemma}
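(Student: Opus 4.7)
The plan is to exploit the fact, already noted in the paragraph preceding the statement, that $t\mapsto P_V^k+tk^2|x|^{2\gamma}W$ is an analytic family of type (A) in the sense of Kato, so that both $\lambda(t)$ and $u(t)$ are analytic branches on a neighbourhood of $0$, with $u(t)$ lying in a common domain $\mathbb D_V$. In particular, derivatives in $t$ commute with the eigenvalue equation and with the $L^2$ pairing with fixed vectors from $\mathbb D_V$.

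First, I would write the eigenvalue equation
\[
\bigl(P_V^k+tk^2|x|^{2\gamma}W\bigr)u(t)\,=\,\lambda(t)\,u(t),
\]
and differentiate both sides with respect to $t$ at $t=0$. This yields
\[
k^2|x|^{2\gamma}W\,u(0)\,+\,P_V^k\,\dot u(0)\,=\,\dot\lambda(0)\,u(0)\,+\,\lambda(0)\,\dot u(0).
\]
Then I would take the $L^2$ scalar product with $u(0)$. The term $\langle P_V^k\dot u(0),u(0)\rangle$ is handled by self‑adjointness of $P_V^k$ (which was recalled in Section~2): it equals $\langle\dot u(0),P_V^k u(0)\rangle=\lambda(0)\langle\dot u(0),u(0)\rangle$, which is exactly the cross term appearing on the right-hand side. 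The two thus cancel, leaving
\[
k^2\int_{\R}|x|^{2\gamma}W(x)\,|u(0)(x)|^2\,dx\,=\,\dot\lambda(0)\,\|u(0)\|_{\mathcal H}^2,
\]
and since $u(0)$ is normalised, the stated formula follows.

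The only genuinely delicate point is justifying that these manipulations make sense. Concretely, one must check that $|x|^{2\gamma}W\,u(0)\in L^2(\R)$ (true because $W$ has compact support and $u(0)\in\mathbb D_V\subset\{V^{1/2}u\in L^2\}$), that $\dot u(0)$ lies in the common domain $\mathbb D_V$ (this is where the type (A) property of Kato is used, and is the reason the paragraph above the lemma invokes it), and that the self‑adjointness argument applies to $\dot u(0)$ rather than merely to the spectral subspace. All of these are standard consequences of the Kato framework once one knows that the perturbation $k^2|x|^{2\gamma}W$ is $P_V^k$‑bounded with relative bound zero, which follows from the boundedness and compact support of $W$.

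I do not expect any substantial obstacle beyond this domain bookkeeping: once analyticity of the eigenbranch is granted, the computation is a two‑line integration by parts packaged as the Hellmann–Feynman identity. The essential content is simply that the derivative of the operator family is multiplication by $k^2|x|^{2\gamma}W$, and that the eigenvector contributions to $\dot\lambda(0)$ drop out thanks to self‑adjointness together with the normalisation $\|u(t)\|_{\mathcal H}=1$.
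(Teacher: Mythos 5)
Your proof is correct and takes essentially the same approach as the paper's: the paper differentiates the pairing $\langle u(t),P^k_{V+t|x|^{2\gamma}W}u(t)\rangle$ and uses self-adjointness together with the normalization $\|u(t)\|_{\mathcal H}=1$ to eliminate the $\dot u$ terms, while you differentiate the operator equation first and then pair with $u(0)$ --- a trivially equivalent rearrangement of the same Hellmann--Feynman computation. The domain bookkeeping you flag (type (A) analyticity, $|x|^{2\gamma}W\,u(0)\in L^2$) is exactly what the paper defers to Kato in the paragraph preceding the lemma.
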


\begin{proof}
    Write the eigenvalue equation: \begin{equation}
        \label{evefht}P_{V+t|x|^{2\gamma}W}^ku(t)=\lambda(t)u(t).
    \end{equation} Multiply by $u(t)$ and then differentiate in $t$ to get 
    \begin{equation}\label{fgfgfgfg}
        \begin{split}
            \frac{d}{dt}\lambda(t)&=\dfrac{d}{dt}\lan u(t),P_{V+t|x|^{2\gamma}W}^ku(t)\ran\\&= \left\lan \dfrac{d}{dt}u(t),P_{V+t|x|^{2\gamma}W}^ku(t)\right\ran+\left\lan u(t),\dfrac{d}{dt}\left(P_{V+t|x|^{2\gamma}W}^ku(t)\right)\right\ran\\&=\lambda(t)\dfrac{d}{dt}\lan u(t),u(t)\ran+\left\langle u(t),\left(\dfrac{d}{dt}P_{V+t|x|^{2\gamma}W}^k\right)u(t)\right\rangle\\&=\left\langle u(t),\left(\dfrac{d}{dt}P_{V+t|x|^{2\gamma}W}^k\right)u(t)\right\rangle.
        \end{split}
      \end{equation}
 Evaluating at $t=0$ gives the result.     
\end{proof}

\begin{remark}
    \label{rmkofhelfey} Following the same proof, we can show that whenever $u(t)$ and $v(t)$ are two orthogonal eigenfunction branches that correspond to the same 
eigenvalue branch $\lambda(t)$, we have 
$$\left\langle u(t),\left(\dfrac{d}{dt}P_{V+t|x|^{2\gamma}W}^k\right)v(t)\right\rangle=0.$$ 
This is because, by orthogonality, the left hand side of (\ref{fgfgfgfg}) is zero. This does not occur on the line since the eigenvalues are simple.
\end{remark}

\subsection{Generic Simplicity Result}
 Fix $k,l\in\Z^*$ such that $k^2\neq l^2$. We define the set 
$$\mathcal{O}_{k,l,n}^\gamma=\{V\in {\mathbb{V}_\gamma};\text{spec}_n(P^k_{V})\cap\text{spec}_n(P^l_{V})=\emptyset\}\subset {\mathbb{V}_\gamma},$$ 
where $\text{spec}_n(P^k_{V})$ (resp. $\text{spec}_n(P^k_{V})$) denotes the first $n$ eigenvalues of $P^k_{V}$ (resp. $P^k_{V}$).

  By definition, we have that for any $\gamma>0$, \begin{equation*}
      \label{ocao} \mathbb V_\gamma^g=\bigcap_{k,l,n}\mathcal{O}_{k,l,n}^\gamma.
  \end{equation*}
   We first prove that $\mathcal{O}_{k,l,n}^\gamma$ is open in $({\mathbb{V}_\gamma},\norm{.}_{{\mathbb{V}_\gamma}})$.
  \subsubsection{Openness Of $\mathcal{O}_{k,l,n}^\gamma$}
  To prove that $\mathcal{O}_{k,l,n}^\gamma$ is open in $({\mathbb{V}_\gamma},\norm{.}_{{\mathbb{V}_\gamma}})$ we need to prove the continuity of the eigenvalues of the operator $P_V^k$. For a fixed $k\in\Z^*$ and $V\in{\mathbb{V}_\gamma}$, denote by $\lambda_m^k(V)$ the $m^{th}$ eigenvalue of $P_V^k$.
  \begin{proposition}[Continuity Of Spectrum]\label{jbtamnalbert}
  Fix $\gamma>0,V\in{\mathbb{V}_\gamma}$ and $k\in\Z^*$. Let $(V_n)_{n\geq 1}$ be a sequence of functions in ${\mathbb{V}_\gamma}$ that converges to $V$ in $\norm{.}_{{\mathbb{V}_\gamma}}$. Then, for any $m$ and
  any $\epsilon>0$ there exist $n_{m,\epsilon}$, such that for all $n\geq n_{m,\epsilon}$,
  \begin{equation}
        \label{ktrmk2b}|\lambda_m^k(V_n)-\lambda_m^k(V)|<2\epsilon.
    \end{equation} 
    \end{proposition}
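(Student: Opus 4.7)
The plan is to use the min-max characterization of eigenvalues together with the fact that, because $\tilde W\ge 1$ for every element of $\mathbb V_\gamma$, all operators $P_V^k$ with $V\in\mathbb V_\gamma$ share the same quadratic form domain, and the perturbation $P_{V_n}^k-P_V^k$ can be controlled by the form of $P_V^k$ itself.

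Concretely, I would first set
\[
q_V(u)\,:=\,\int_\R |u'(x)|^2\,dx\,+\,k^2\int_\R V(x)|u(x)|^2\,dx,
\]
and observe that since $V=|x|^{2\gamma}\tilde W$ with $1\le \tilde W\le \|\tilde W\|_\infty$, the form domain of $q_V$ coincides with $\{u\in H^1(\R) : |x|^\gamma u\in L^2(\R)\}$ independently of $V\in\mathbb V_\gamma$. By the variational principle,
\[
\lambda_m^k(V)\,=\,\min_{\substack{S\subset\mathcal D(q_V)\\\dim S=m}}\,\max_{u\in S\setminus\{0\}}\,\frac{q_V(u)}{\|u\|^2}.
\]

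Next I would compare $q_{V_n}$ with $q_V$. Writing $V_n-V=|x|^{2\gamma}(W_n-W)$, for any $u$ in the common form domain
\[
|q_{V_n}(u)-q_V(u)|\,\le\,k^2\|W_n-W\|_\infty\int_\R |x|^{2\gamma}|u|^2\,dx.
\]
Here the assumption $\tilde W\ge 1$ plays its key role: it yields $|x|^{2\gamma}\le V(x)$, and consequently
\[
k^2\int_\R|x|^{2\gamma}|u|^2\,dx\,\le\,k^2\int_\R V|u|^2\,dx\,\le\,q_V(u).
\]
Combining these estimates gives $|q_{V_n}(u)-q_V(u)|\le \|W_n-W\|_{\mathbb V_\gamma}\,q_V(u)$, and hence
\[
(1-\|W_n-W\|_{\mathbb V_\gamma})\,q_V(u)\,\le\,q_{V_n}(u)\,\le\,(1+\|W_n-W\|_{\mathbb V_\gamma})\,q_V(u).
\]
Inserting this sandwich into the min-max formula gives $(1-\|W_n-W\|_{\mathbb V_\gamma})\lambda_m^k(V)\le \lambda_m^k(V_n)\le (1+\|W_n-W\|_{\mathbb V_\gamma})\lambda_m^k(V)$.

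Finally, given $m$ and $\epsilon>0$, the convergence $\|W_n-W\|_\infty\to 0$ lets me pick $n_{m,\epsilon}$ so that $\|W_n-W\|_{\mathbb V_\gamma}\,\lambda_m^k(V)<\epsilon$ for all $n\ge n_{m,\epsilon}$, which yields $|\lambda_m^k(V_n)-\lambda_m^k(V)|<\epsilon<2\epsilon$. The main (and really only) conceptual point is the identification of a common form domain, which fails in general for potentials that are merely bounded perturbations of each other but is saved here by the uniform lower bound $\tilde W\ge 1$ built into the definition of $\mathbb V_\gamma$; the rest is routine quadratic-form perturbation and min-max.
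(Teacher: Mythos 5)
Your proof is correct and follows essentially the same route as the paper. Both arguments hinge on the same key observation---that $\tilde W\ge 1$ gives $|x|^{2\gamma}\le V$, so the form perturbation $k^2\int|x|^{2\gamma}|W_n-W||u|^2$ is absorbed by the form $q_V(u)$ itself---and both then feed the resulting relative bound into the min-max principle; the only cosmetic difference is that you package this as a two-sided sandwich $(1\pm\delta)q_V\le q_{V_n}$ applied directly in min-max, whereas the paper first establishes the upper bound by testing on the span of the first $m$ eigenvectors of $P_V^k$ and then exchanges the roles of $V$ and $V_n$ for the lower bound.
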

    \begin{proof}
      Write $V_n=V+|x|^{2\gamma}W_n$, where $W_n$ is a sequence of continuous bounded functions that converges uniformly to 0 on $\mathbb R$.
      Observe that
      \[
        \left|\frac{V_n-V}{V}\right |\,\leq\, |W_n|,
      \]
      since $V(x)/|x|^{2\gamma}\geq 1$.
      
For $F$ subset of the domain of $P_V^k$, denote by $\Lambda_V$ the following map 
$$\Lambda_V(F)=\underset{u\neq 0}{\underset{u\in F}{max}}\left\{\dfrac{\langle P_V^ku,u\rangle_\mathcal{H}}{\norm{u}_\mathcal{H}^2}\right\}.$$  
Fix $m$ and let $F$ be the subspace spanned by the first $m$ eigenvectors of $P_V$. We compute   
\begin{equation*}
          \begin{split}
            \langle P_{V_n}^ku,u\rangle_{\mathcal H}&=\langle P_{ V}^ku,u\rangle_{\mathcal H}+k^2\int_{\mathbb R}|V_n-V||u|^2\\
            &\leq \Lambda_V(F)+k^2\int_{\mathbb R}{V\frac{|V_n-V|}{V}|u|^2}\\
            &\leq (1+\|W_n\|_{\infty})\lambda_m^k(V).
          \end{split}
      \end{equation*}
      Taking the maximum over all functions $u\in F$, we get that
      $$\lambda_m^k(V_n)-\lambda_m^k(V)\,\leq\,\lambda_m^k(V)\|W_n\|_{\infty}.$$
      Exchanging the roles of $V_n$ and $V$, we obtain
      \[
       \lambda_m^k(V)-\lambda_m^k(V_n)\,\leq\,\lambda_m^k(V_n)\|W_n\|_{\infty}\,\leq\,\lambda_m^k(V)\|W\|_{\infty}(1+\|W_n\|_{\infty}). 
     \]
     Since $\|W_n\|_{\infty}$ tends to $0$, we conclude (\ref{ktrmk2b}).
    \end{proof}
We now prove that $\mathcal{O}_{k,l,n}^\gamma$ is open in $({\mathbb{V}_\gamma},\norm{.}_{{\mathbb{V}_\gamma}})$.
\begin{theorem}
    \label{openness} For any $\gamma>0,n\in\mathbb N$ and $k,l\in\Z^*$ fixed such that $k^2\neq l^2$. Then, the set $\mathcal{O}_{k,l,n}^\gamma$ is open in $({\mathbb{V}_\gamma},\norm{.}_{{\mathbb{V}_\gamma}})$.
    \end{theorem}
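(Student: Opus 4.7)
The plan is to deduce openness directly from Proposition \ref{jbtamnalbert} (continuity of the individual eigenvalue branches). Fix $V\in\mathcal{O}_{k,l,n}^\gamma$. The defining condition $\mathrm{spec}_n(P_V^k)\cap\mathrm{spec}_n(P_V^l)=\emptyset$ says that the $2n$ real numbers $\{\lambda_i^k(V)\}_{1\leq i\leq n}$ and $\{\lambda_j^l(V)\}_{1\leq j\leq n}$ satisfy $\lambda_i^k(V)\neq\lambda_j^l(V)$ for every pair $(i,j)$, so the quantity
\[
\delta \,:=\, \min_{1\leq i,j\leq n}\bigl|\lambda_i^k(V)-\lambda_j^l(V)\bigr|
\]
is strictly positive. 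I want to produce a $\|\cdot\|_{\mathbb{V}_\gamma}$-ball around $V$ contained in $\mathcal{O}_{k,l,n}^\gamma$.

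Since $(\mathbb{V}_\gamma,\|\cdot\|_{\mathbb{V}_\gamma})$ is a metric space, openness is equivalent to sequential closedness of the complement, and I would argue by contradiction. Suppose $V$ is not interior to $\mathcal{O}_{k,l,n}^\gamma$; then there exists a sequence $V_p\to V$ with $V_p\notin\mathcal{O}_{k,l,n}^\gamma$ for every $p$. Each failure produces indices $i_p,j_p\in\{1,\dots,n\}$ with $\lambda_{i_p}^k(V_p)=\lambda_{j_p}^l(V_p)$. The pairs $(i_p,j_p)$ range over the finite set $\{1,\dots,n\}^2$, so a standard pigeonhole and extraction reduces to a constant pair $(i,j)$ along a subsequence. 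Applying Proposition \ref{jbtamnalbert} to each of the two branches $\lambda_i^k$ and $\lambda_j^l$ along $V_p\to V$ and letting $p\to\infty$ yields $\lambda_i^k(V)=\lambda_j^l(V)$, contradicting the positivity of $\delta$.

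The main obstacle, if any, is essentially absent: the argument is a purely formal topological consequence of Proposition \ref{jbtamnalbert}. The only point worth flagging is that Proposition \ref{jbtamnalbert} gives continuity of each fixed eigenvalue branch \emph{one index at a time}, with no uniformity in the index — which is precisely why the finiteness of the threshold $n$ in the definition of $\mathcal{O}_{k,l,n}^\gamma$ is essential. Note also that simplicity of the eigenvalues of the one-dimensional operators $P_V^k$, recalled in Section \ref{dfgfd}, ensures that $\lambda_i^k(V)$ is an unambiguously defined real number, but simplicity plays no further role in the argument.
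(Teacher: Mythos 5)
Your proof is correct and follows essentially the same route as the paper: both hinge on Proposition \ref{jbtamnalbert} (continuity of individual eigenvalue branches) together with the finiteness of $n$, which makes $\delta>0$. The paper runs the argument directly (a $\delta/4$-triangle-inequality estimate shows every $V_j$ close enough to $V$ stays in $\mathcal{O}_{k,l,n}^\gamma$), whereas you argue by contradiction with a pigeonhole extraction of a fixed pair of indices; this is a cosmetic reorganization rather than a different method.
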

    \begin{proof}
   Take $V\in \mathcal{O}_{k,l,n}^\gamma$.
Let $(V_j)_{j\geq1}$ be a sequence in ${\mathbb{V}_\gamma}$ that converges to $V$ in $\norm{.}_{{\mathbb{V}_\gamma}}$. We first prove that there exists $j_0>0$ such that for all $j\geq j_0$, $V_j\in \mathcal{O}_{k,l,n}^\gamma$.  \\
   Recall that $\lambda_m^k({V})$ and $\lambda_m^l({V})$ denote the $m^{th}$ eigenvalue of $P_{V}^k$ and $P_{V}^l$ respectively. Since $(V_j)_{j\geq1}$ converges to $V$ in ${\mathbb{V}_\gamma}$, then by proposition \ref{jbtamnalbert}, for any $\epsilon>0$, there exists $j_k>0$ and $j_l>0$ such that for all $j\geq j_{k,l}:=\max\{j_k,j_l\}$ the following inequalities hold true \begin{equation}
       \label{ghribbb} |\lambda_m^k(V_j)-\lambda_m^k(V)|<2\epsilon, \text{ and } |\lambda_m^l(V_j)-\lambda_m^l(V)|<2\epsilon.
   \end{equation}
    Now, let $I=\{1,...,n\}$ and denote by $$\delta=\min_{i_1,i_2\in I}\left\{|\lambda_{i_1}^k(V)-\lambda_{i_2}^l(V)|\right\}>0.$$ For $\epsilon=\frac{\delta}{4}$, there exists $j_{k,l}$ such that for all $j\geq j_{k,l}$, (\ref{ghribbb}) implies that  
\begin{equation*}
          \begin{split}
              \delta&\leq |\lambda_{i_1}^k({V})-\lambda_{i_2}^l({V})|\\&\leq|\lambda_{i_1}^k({V})-\lambda_{i_1}^k({V_j})|+|\lambda_{i_1}^k(V_j)-\lambda_{i_2}^l(V_j)|+|\lambda_{i_2}^l({V_j})-\lambda_{i_2}^l({V})|\\&< \dfrac{\delta}{2}+|\lambda_{i_1}^k(V_j)-\lambda_{i_2}^l(V_j)|+\dfrac{\delta}{2}=\delta+|\lambda_{i_1}^k(V_j)-\lambda_{i_2}^l(V_j)|.
          \end{split}
      \end{equation*} This implies that $|\lambda_{i_1}^k(V_j)-\lambda_{i_2}^l(V_j)|>0$ i.e. $\lambda_{i_1}^k(V_j)\neq\lambda_{i_2}^l(V_j)$ for all $i_1,i_2\in I$, which means that 
      $$\text{spec}_n(P^k_{V_j})\cap\text{spec}_n(P^l_{V_j})=\emptyset.$$
      Therefore, for any $j\geq j_{k,l}$, $V_j\in \mathcal{O}_{k,l,n}^\gamma$. \\
      
The preceding statement implies that the complement of $\mathcal{O}_{k,l,n}^\gamma$ is closed and therefore, $\mathcal{O}_{k,l,n}^\gamma$ is open.
    \end{proof}

Now, we prove the density of $\mathcal{O}_{k,l,n}^\gamma$ in $({\mathbb{V}_\gamma},\norm{.}_{{\mathbb{V}_\gamma}})$.

\subsubsection{Density Of $\mathcal{O}_{k,l,n}^\gamma$}
We first give a quantitative version of the continuity of the spectrum, that will be useful in what follows.
For  $k\in\Z^*$ fixed, denote by $\kappa_m$ the distance from $\lambda_m^k(V)$ to the rest of the spectrum of $P_V^k$, i.e. 
$$\kappa_m=\text{dist}\left(\lambda_m^k(V),\text{spec}(P_V^k)\setminus \lambda_m^k(V)\right).$$ 

For any $W\in\mathbb W$ satisfying 
 \begin{equation}\label{condonw}
     \norm{|x|^{2\gamma}W}_\mathcal{1}<\dfrac{\kappa_m}{|k|^2},
 \end{equation}
 we define the intervals $J_+$ and $J_-$ as $$J_+=\left]\lambda_m^k(V)+|k|\norm{|x|^{2\gamma}W}_\mathcal{1},\lambda_m^k(V)+\kappa_m\right[,$$
 $$J_+=\left]\lambda_m^k(V)-\kappa_m,\lambda_m^k(V)-|k|\norm{|x|^{2\gamma}W}_\mathcal{1}\right[.$$ 

\begin{proposition}\label{soc}
Fix $\gamma>0$ and $k\in\Z^*$. For any $W\in\mathbb W$ satisfying (\ref{condonw}), for any $J\subset J_{-}\cup J_{+}$ we have $\mu\in J $ implies $\mu\notin\mathrm{spec}(P_{V+|x|^{2\gamma}W}^k)$.
\end{proposition}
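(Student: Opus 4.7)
The plan is to view $P^k_{V+|x|^{2\gamma}W}$ as a perturbation of $P_V^k$ and show that, under condition~\eqref{condonw}, no eigenvalue of the perturbed operator can fall inside the punctured gap $J_- \cup J_+$ around $\lambda_m^k(V)$. Concretely, I would work along the affine one-parameter family $P(t) := P^k_{V+t|x|^{2\gamma}W}$ for $t \in [0,1]$, which is holomorphic of type (A) in the sense already invoked before Lemma~\ref{feyhel}.

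By min-max (or equivalently by organizing the eigenvalues into real-analytic branches), label the eigenvalues as $\lambda_j(t)$ with $\lambda_j(0) = \lambda_j^k(V)$. Since $W \geq 0$, each branch is non-decreasing in $t$. To quantify the shift of each branch, I would integrate the Hellmann--Feynman identity of Lemma~\ref{feyhel} along the branch and combine it with the energy relation $k^2\int V|u_j(t)|^2 \leq \lambda_j(t)$ (obtained by pairing the eigenvalue equation with $u_j(t)$) together with the pointwise inequality $|x|^{2\gamma} \leq V$ (valid because $\tilde W \geq 1$); this should yield
\[
0 \,\leq\, \lambda_j(1) - \lambda_j(0) \,\leq\, |k|\,\norm{|x|^{2\gamma}W}_\infty.
\]
Under~\eqref{condonw} this shift is strictly less than $\kappa_m$, so no branch can cross the gap of $P_V^k$ around $\lambda_m^k(V)$: $\lambda_m(1)$ stays in $[\lambda_m^k(V),\,\lambda_m^k(V)+|k|\norm{|x|^{2\gamma}W}_\infty]$, while every other $\lambda_j(1)$ stays at distance at least $\kappa_m - |k|\norm{|x|^{2\gamma}W}_\infty > 0$ from $\lambda_m^k(V)$, hence outside the open interval $(\lambda_m^k(V)-\kappa_m,\lambda_m^k(V)+\kappa_m)$.

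Since $J_- \cup J_+$ is by construction the punctured annulus $\{\mu : |k|\norm{|x|^{2\gamma}W}_\infty < |\mu - \lambda_m^k(V)| < \kappa_m\}$, neither $\lambda_m(1)$ (which is too close to $\lambda_m^k(V)$) nor any $\lambda_j(1)$ with $j \neq m$ (which is too far from $\lambda_m^k(V)$) can lie in $J_- \cup J_+$. This is exactly the statement of the proposition.

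The main obstacle is the sharp shift estimate. A plain operator-norm argument via Weyl's inequality only gives $\lambda_j(1) - \lambda_j(0) \leq k^2\norm{|x|^{2\gamma}W}_\infty$, which is compatible with condition~\eqref{condonw} but would shrink the excluded zone to $k^2\norm{|x|^{2\gamma}W}_\infty$ instead of the sharper $|k|\norm{|x|^{2\gamma}W}_\infty$ appearing in the definition of $J_\pm$. Extracting the extra factor of $|k|^{-1}$ is the delicate quantitative point: it relies on the coercivity $\int V|u_j(t)|^2 \leq \lambda_j(t)/k^2$ available for eigenfunctions of $P^k_{V+t|x|^{2\gamma}W}$, essentially trading one factor of $k^2$ for the size of the eigenvalue.
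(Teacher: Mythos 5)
Your approach---tracking eigenvalue branches of $P^k_{V+t|x|^{2\gamma}W}$ via Hellmann--Feynman together with the monotonicity coming from $W\geq 0$---is genuinely different from the paper's, which is a resolvent argument: the paper factorizes
$P^k_{V+|x|^{2\gamma}W}-\mu = \bigl(I + (P^k_{V+|x|^{2\gamma}W}-P_V^k)(P_V^k-\mu)^{-1}\bigr)(P_V^k-\mu)$,
bounds $\norm{(P^k_{V+|x|^{2\gamma}W}-P_V^k)(P_V^k-\mu)^{-1}}\leq k^2\norm{|x|^{2\gamma}W}_\infty/\mathrm{dist}(\mu,\mathrm{spec}(P_V^k))$ and invokes the Neumann series, so it never needs to track branches at all. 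Your version, however, has a real gap: the sharp shift estimate $\lambda_j(1)-\lambda_j(0)\leq |k|\,\norm{|x|^{2\gamma}W}_\infty$ on which the whole argument hinges does not follow from the chain you sketch, and it appears to be false. Starting from $|x|^{2\gamma}\leq V$ and $k^2\int V|u|^2\leq\lambda(t)$ one only gets $\dot\lambda(t)=k^2\int|x|^{2\gamma}W|u|^2 \leq k^2\norm{W}_\infty\int V|u|^2\leq \norm{W}_\infty\,\lambda(t)$, which is a multiplicative (Gronwall-type) bound controlled by $\norm{W}_\infty$, not the additive bound $|k|\,\norm{|x|^{2\gamma}W}_\infty$ you need; there is no mechanism here that trades a power of $k$ for the size of the eigenvalue.

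The claimed $|k|$-bound can in fact be violated. Take $\gamma=1$ and $\tilde W\equiv 1$ (so $V=x^2$), the ground state $u=\varphi_{k,0}$, and a narrow tall nonnegative bump $W$ concentrated near $x=0$. Then $\dot\lambda(0)=k^2\int x^2W|u|^2\,dx$ is, for $k$ large, comparable to $|k|\,W(0)/2$, while $\norm{x^2W}_\infty$ scales like $W(0)$ times the square of the support width and can be made arbitrarily small relative to $W(0)$; hence $\dot\lambda(0)>|k|\norm{x^2W}_\infty$ for such $W$. So one is left with the crude bound $\lambda_m(1)-\lambda_m(0)\leq k^2\norm{|x|^{2\gamma}W}_\infty$, under which the branch starting at $\lambda_m^k(V)$ is only confined to $\bigl[\lambda_m^k(V),\,\lambda_m^k(V)+k^2\norm{|x|^{2\gamma}W}_\infty\bigr]$, an interval that overlaps $J_+$ (whose inner endpoint sits at distance $|k|\,\norm{|x|^{2\gamma}W}_\infty$). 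The argument therefore does not close, whereas the resolvent route in the paper gets its quantitative input directly from $\mathrm{dist}(\mu,\mathrm{spec}(P_V^k))$ rather than from any per-branch displacement estimate.
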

  \begin{proof}
     Let $W\in\mathbb W$ satisfying (\ref{condonw}). $P_V^k$ is an unbounded selfadjoint operator. Then, for $\mu\in J$ (which is subset of the resolvent set of $P_V^k$), $(P_V^k-\mu)^{-1}$ is bounded normal operator and the spectral radius coincides with the norm of the resolvent, that is, \begin{equation}\label{albashtino} 
     \norm{(P_V^k-\mu)^{-1}}_{\mathcal{L}(\mathcal{H})}=\sup\left\{|\mu|, \mu\in\text{spec}((P_V^k-\mu)^{-1})\right\}=\dfrac{1}{\text{dist}(\mu,\text{spec}(P^k_V))}, \end{equation} where $\mathcal{L}(\mathcal{H})$ denotes the space of linear bounded functions from $\mathcal{H}$ to $\mathcal{H}$. \\
 Let $u\in\mathcal{H}$, and let $v=(P_V^k-\mu)^{-1}u$. We compute 
 \begin{equation*}
     \begin{split}
       \norm{(P_{V+|x|^{2\gamma}W}^k-P_V^k)(P_V^k-\mu)^{-1}u}_\mathcal{H}^2&= \norm{(P_{V+|x|^{2\gamma}W}^k-P_V^k)v}_\mathcal{H}^2\\
       &=k^4\norm{|x|^{2\gamma}Wv}^2_\mathcal{H}\leq k^4\norm{|x|^{2\gamma}W}_\mathcal{1}^2\norm{v}_\mathcal{H}^2\\
       &\leq\dfrac{k^4}{\text{dist}(\mu,\text{spec}(P_V^k))^2}\norm{|x|^{2\gamma}W}_\mathcal{1}^2\norm{u}_\mathcal{H}^2.
     \end{split}
 \end{equation*} Now, for $\mu\in J_-\cup J_+$, we have that \begin{equation}
     \label{mb3rfshmnwen}\text{dist}(\mu,\text{spec}(P_V^k))>|k|^2\norm{|x|^{2\gamma}W}_\mathcal{1}.
 \end{equation} 
 Therefore, $$\norm{(P_{V+|x|^{2\gamma}W}^k-P_V^k)(P_V^k-\mu)^{-1}}_\mathcal{L(\mathcal{H})}\leq\dfrac{k^2}{\text{dist}(\mu,\text{spec}(P_V^k))}\norm{|x|^{2\gamma}W}_\mathcal{1}<1.$$
 Then by Neumann lemma, $(I+(P_{V+|x|^{2\gamma}W}^k-P_V^k)(P_V^k-\mu)^{-1})$ is invertible. \\ 
 Now, we have
 \begin{equation*}
     \begin{split}
         I&=(P_V^k-\mu)(P_V^k-\mu)^{-1}\\&=(P_V^k-\mu)(P_V^k-\mu)^{-1}+(P_{V+|x|^{2\gamma}W}^k-\mu)(P_V^k-\mu)^{-1}-(P_{V+|x|^{2\gamma}W}^k-\mu)(P_V^k-\mu)^{-1}\\&=(P_{V+|x|^{2\gamma}W}^k-\mu)(P_V^k-\mu)^{-1}-(P_{V+|x|^{2\gamma}W}^k-\mu+\mu-P_V^k)(P_V^k-\mu)^{-1}\\&=(P_{V+|x|^{2\gamma}W}^k-\mu)(P_V^k-\mu)^{-1}-(P_{V+|x|^{2\gamma}W}^k-P_V^k)(P_V^k-\mu)^{-1}.
     \end{split}
 \end{equation*} This implies that $$I+(P_{V+|x|^{2\gamma}W}^k-P_V^k)(P_V^k-\mu)^{-1}=(P_{V+|x|^{2\gamma}W}^k-\mu)(P_V^k-\mu)^{-1}.$$
 We conclude that $P_{V+|x|^{2\gamma}W}^k-\mu$ is invertible and thus $\mu\notin\text{spec}(P_{V+|x|^{2\gamma}W}^k)$. 
  \end{proof}
In some other words, the preceding proposition says that under a small enough perturbation, the eigenbranches starting from $\lambda_m^k(V)$ remain in some interval containing no other eigenvalues of the unperturbed operator.

To prove that $\mathcal{O}_{k,l,n}^\gamma$ is dense, we have to show that the following assertion holds true: Let $V\in{\mathbb{V}_\gamma}$ such that $V\notin\mathcal{O}_{k,l,n}^\gamma$, then $$\forall\epsilon>0,\exists\til V\in\mathcal{O}_{k,l,n}^\gamma \text{ such that } \norm{V-\til V}_{{\mathbb{V}_\gamma}}<\epsilon.$$ Informally speaking, we prove that under a finite number of small enough perturbations of the operators $P_V^k$ and $P_V^l$, the perturbed operator will not have any eigenbranch in common (so it will be in $\mathcal{O}_{k,l,n}^\gamma$ and close enough to $V$). 
\begin{lemma}\label{milli}
 Let $g\in L^1_{loc}(\mathbb R)$ (locally integrable). If for all $W\in\mathbb W$, $$\int_\mathbb R W(x)g(x)dx=0,$$ then $g=0$ almost everywhere.
 \end{lemma}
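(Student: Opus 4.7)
The plan is to reduce the hypothesis to the standard du Bois--Reymond fundamental lemma, which asserts that a locally integrable function annihilated by every $\phi\in\mathcal{C}^\infty_c(\mathbb R)$ must vanish almost everywhere. The only difficulty is that our hypothesis only gives us the pairing against \emph{nonnegative} test functions $W\in\mathbb W$; I would remove this sign constraint by a standard splitting trick.

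More precisely, let $\phi\in\mathcal{C}^\infty_c(\mathbb R)$ be arbitrary. I would pick a fixed nonnegative bump $\chi\in\mathcal{C}^\infty_c(\mathbb R)$ equal to $1$ on the support of $\phi$, and set $C:=\norm{\phi}_{\infty}$. Then both
\[
\chi_1 \,:=\, C\chi \qquad\text{and}\qquad \chi_2 \,:=\, \phi + C\chi
\]
are elements of $\mathbb W$: they are smooth and compactly supported (inside $\mathrm{supp}(\chi)$), and they are nonnegative, since on $\mathrm{supp}(\phi)$ one has $\chi_2 \geq -\norm{\phi}_{\infty}+C \geq 0$, while outside $\mathrm{supp}(\phi)$ one has $\chi_2 = C\chi\geq 0$. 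Applying the hypothesis to $\chi_1$ and $\chi_2$ separately and subtracting yields
\[
\int_\mathbb R \phi(x) g(x)\,dx \;=\; \int_\mathbb R \chi_2(x) g(x)\,dx \,-\, \int_\mathbb R \chi_1(x) g(x)\,dx \;=\; 0.
\]

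Since $\phi\in\mathcal{C}^\infty_c(\mathbb R)$ was arbitrary, $g$ annihilates every smooth compactly supported test function. I would then invoke the fundamental lemma of the calculus of variations (for each compact interval $[-R,R]$, $g$ restricted to this interval is in $L^1$ and its distributional action on $\mathcal{D}(]-R,R[)$ vanishes, hence $g=0$ almost everywhere on $]-R,R[$; letting $R\to\infty$ gives the conclusion).

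The only step requiring care is the splitting, but it is entirely elementary — the key point is that $\mathbb W$ is closed under addition and under multiplication by positive scalars, so the large nonnegative bump $C\chi$ lies in $\mathbb W$ and can be used both to lift $\phi$ into the positive cone and to subtract off afterwards. Everything else is a direct appeal to a textbook result.
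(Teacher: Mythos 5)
Your proof is correct, but it takes a different route from the paper's. The paper proceeds directly: it mollifies indicator functions $\mathbbm{1}_{[a,b]}$ by setting $W_\epsilon=\varphi_\epsilon*\mathbbm{1}_{[a,b]}\in\mathbb W$, applies dominated convergence to conclude $\int_a^b g=0$ for every interval $[a,b]$, and deduces $g=0$ a.e.\ from that. You instead observe that the cone $\mathbb W$ already spans all of $\mathcal{C}^\infty_c(\mathbb R)$ by differences — adding a large nonnegative bump $C\chi$ to any test function $\phi$ lifts it into $\mathbb W$, and $C\chi$ itself is in $\mathbb W$ — and then hand the problem off to the du Bois--Reymond lemma. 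Both arguments are clean and roughly the same length; the paper's is self-contained (no appeal to an external lemma, just dominated convergence), while yours is modular and makes the structural point explicit that a positivity constraint on test functions costs nothing here because $\mathbb W-\mathbb W=\mathcal{C}^\infty_c(\mathbb R)$. One small remark: mollifiers are present in both approaches, explicitly in the paper's and implicitly in the standard proof of the fundamental lemma you invoke, so there is no real saving in machinery — just a different place to draw the line between ``what we prove'' and ``what we cite.''
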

 \begin{proof}
     Denote by $\mathbbm{1}_{[a,b]}$ the indicator function of the interval $[a,b]$, and by $\varphi$ the standard mollifier. For $\epsilon\leq 1$, the function $$W_\epsilon:=\varphi_\epsilon*\mathbbm{1}_{[a,b]}:=\frac{1}{\epsilon}\varphi\left(\frac{x}{\epsilon}\right)* \mathbbm{1}_{[a,b]}$$ is in $\mathbb W$: smooth, non-negative, its support is subset of $[a-\epsilon,b+\epsilon]\subset[a-1,b+1]$. Moreover, it converges as $\epsilon\rightarrow0$ pointwise to $\mathbbm{1}_{[a,b]}$. Now, since $g$ is locally integrable, we apply Lebesgue dominated convergence theorem to deduce that $$\lim_{\epsilon\rightarrow0}\int_\mathbb R W_\epsilon(x)g(x)dx=\int_a^b g(x)dx.$$ The left hand side of the preceding equation is $0$ by assumption, so $\int_a^b g(x)dx=0$. This is true for arbitrary $a,b$ which implies that $g=0$ a.e. in $\mathbb R$.
 \end{proof}
It is well known that the  Schr\"odinger operator on the line has simple eigenvalues (see \cite{pankov2001introduction}), but this is not the case on the circle (where it may have double eigenvalues), nor on $\mathbb R^n$ for $n>1$. So, for a moment, we forget that we are working on $\mathbb R$ and we prove the following lemma on $\mathbb R^n$ for $n>1$, which is a variation of Albert's arguments in \cite{albert1975genericity} (also, it is not hard to see that it remains true on $\mathbb S^1$). In the following lemma, $|x|^{2\gamma}$ will denote $\norm{x}^{2\gamma}$. \\ Remark that lemma \ref{milli} holds true on $\mathbb R^n$. 
\begin{lemma}
    \label{simpofeb} Fix $\gamma>0, V\in {\mathbb{V}_\gamma}$ and $k\in \mathbb Z^*$. Let $\lambda$ be an eigenvalue of $P_{V}^k$ of multiplicity $m$. Then, the following assertions hold true.
    \begin{enumerate}
        \item There exists $W\in\mathbb W$ such that $P_{V+t|x|^{2\gamma}W}^k$ has an eigenbranch starting from $\lambda$ of multiplicity strictly less than $m$.
        \item  If we denote by $\kappa=\mathrm{dist}(\lambda,\mathrm{spec}(P_V^k))$, then there exists $t_0>0$ and $W\in\mathbb W$ such that for all $0<t\leq t_0$,
           $P_{V+t|x|^{2\gamma}W}^k$ has $m$ simple eigenvalues in $I=]\lambda-\frac{\kappa}{2},\lambda+\frac{\kappa}{2}[$.
    \end{enumerate} 
    \end{lemma}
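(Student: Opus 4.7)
The plan is to combine Kato's analytic perturbation theory (which guarantees that along the affine family $V+t|x|^{2\gamma}W$ we have analytic eigenvalue and eigenfunction branches) with the Hellmann--Feynman identity from Lemma \ref{feyhel} and Remark \ref{rmkofhelfey}, so as to reduce the splitting question to a finite-dimensional spectral problem on the $\lambda$-eigenspace. Fix an $L^2$-orthonormal basis $(u_i)_{i=1}^m$ of $\ker(P_V^k-\lambda)$ and, for each $W\in\mathbb W$, introduce the $m\times m$ Hermitian matrix
\[
M_W \;=\; \left( k^2 \int_{\R^n} |x|^{2\gamma} W(x)\, u_i(x)\overline{u_j(x)}\, dx \right)_{1\le i,j\le m}.
\]
By Hellmann--Feynman together with Remark \ref{rmkofhelfey}, if one picks the initial orthonormal basis that diagonalizes $M_W$, the first-order velocities at $t=0$ of the $m$ analytic eigenbranches emanating from $\lambda$ are precisely the eigenvalues of $M_W$.

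For part (1), I would prove that some $W\in\mathbb W$ makes $M_W$ fail to be a scalar multiple of the identity; then at least two branches have distinct derivatives at $0$, and the analytic branches split for $t>0$, producing at least one branch of multiplicity strictly smaller than $m$. To exhibit such $W$, argue by contradiction: if $M_W=c(W)\,\mathrm{Id}$ for every $W\in\mathbb W$, then the off-diagonal entries and the pairwise differences of diagonal entries of $M_W$ all vanish as linear functionals of $W$. Applying Lemma \ref{milli} to the real and imaginary parts of $|x|^{2\gamma}u_i\overline{u_j}$ for $i\ne j$ and to $|x|^{2\gamma}(|u_i|^2-|u_j|^2)$, one deduces $u_i\overline{u_j}=0$ and $|u_i|=|u_j|$ almost everywhere; since $m\ge 2$ these two conditions together force $u_i\equiv 0$, contradicting the orthonormality.

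For part (2), I would proceed by strong induction on $m$. The case $m=1$ follows directly from Proposition \ref{soc}. For $m\ge 2$, choose $W_0\in\mathbb W$ as in (1); by Kato analyticity, there exists $t_0^{(1)}>0$ for which $P_{V+t|x|^{2\gamma}W_0}^k$ has eigenbranches of multiplicities $m_1,\ldots,m_r$ with $m_j<m$ and $\sum m_j=m$, and Proposition \ref{soc}, possibly after shrinking $t_0^{(1)}$, confines all of them to $I$. Apply the inductive hypothesis separately to each such branch at some positive time, producing further compactly supported perturbations $W_j\in\mathbb W$ that simplify each cluster into $m_j$ simple eigenvalues inside pairwise disjoint subintervals of $I$. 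A combined perturbation of the form $t(W_0+\sum_j s_j W_j)$ with the $s_j$ and $t_0$ chosen small enough then lies in $\mathbb W$ and achieves all $m$ simple eigenvalues inside $I$.

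The main obstacle is the bookkeeping in the induction: each additional perturbation could in principle re-merge two branches that were already separated at an earlier stage, or push eigenvalues outside of $I$. The remedy is to use the quantitative continuity provided by Proposition \ref{soc}, together with the analyticity of the branches, to select each $s_j$ strictly smaller than the minimum spectral gap created at the previous step, so that the separation already achieved is robust under subsequent compactly supported perturbations and everything remains inside the target interval $I$.
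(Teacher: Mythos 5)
Your proposal follows essentially the same route as the paper's own proof: part (1) reduces the splitting question via Hellmann--Feynman and Remark \ref{rmkofhelfey} to whether the matrix $M_W$ can always be scalar, contradicts this via Lemma \ref{milli}, and concludes that $u_i\overline{u_j}=0$ together with $|u_i|=|u_j|$ forces $u_i\equiv 0$; part (2) runs the same induction on $m$, using Proposition \ref{soc} for quantitative confinement to $I$ and analyticity of the branches to prevent re-merging under the iterated perturbations. The only cosmetic difference is that you invoke the standard Rellich--Kato fact that the branch velocities are exactly the eigenvalues of $M_W$, whereas the paper derives only the implication it needs (identical branches $\Rightarrow$ $M_W$ scalar) directly from the remark.
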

    \begin{proof}
    \begin{enumerate}
    \item Fix $W\in\mathbb W$. By analytic perturbation, the eigenvalue $\lambda$ splits into $m$ eigenbranches (not necessarily distinct) of $P^k_{V+t|x|^{2\gamma}W}$.\\
      Suppose that the eigenbranches are identical, and denote this eigenbranch by $\lambda(t)$ ($\lambda(0)=\lambda$). This means that we can find $m$ orthonormal eigenfunction branches $\{u_1(t),...,u_m(t)\}$ associated to $\lambda(t)$. If we denote by $E_\lambda$ the eigenspace of $\lambda$ in $P_V^k$, then $\mathcal{U}=\{u_1(0),...,u_m(0)\}$ is an orthonormal basis of $E_\lambda$. \\
      Denote by $\dot{q}$ the quadratic form, defined on $E_\lambda$ by $$\dot{q}(u)=k^2\int_{\mathbb R^n}|x|^{2\gamma}W(x)|u(x)|^2dx.$$ \\
      Hellmann–Feynman theorem (theorem \ref{feyhel}) implies that at $t=0$, we have $$\dot{\lambda}(0)=\dot{q}(u_i(0)), \spa \forall 1\leq i\leq m,$$ where the dot represents the derivative with respect to $t$. Moreover, using remark \ref{rmkofhelfey}, we get that for any $i\neq j$, 
      $$0=\dot{q}(u_i(0),u_j(0)),$$
      where we used the same notation for the corresponding symmetric bilinear form.
      Thus the matrix $A_\mathcal{U}:=[\dot{q}(u_i(0),u_j(0))]_{1\leq i,j\leq m}$ satisfies $A_\mathcal{U}=\dot{\lambda}(0)I,$ where $I$ is the $m\times m$ identity matrix. \\
      So, for any orthonormal basis $\mathcal{V}=\{v_1,...,v_m\}$ of $E_\lambda$, The matrix $A_\mathcal{V}$ is a multiple of the identity matrix. Indeed, the matrices $A_\mathcal{U}$ and $A_{\mathcal{V}}$ are related as follows: if we denote by $P$ the matrix of change of basis between $\mathcal{U}$ and $\mathcal{V}$, then, as $\mathcal{U}$ and $\mathcal{V}$ are sets of orthonormal vectors, $P$ is orthogonal (that is $P^tP=I$), and $$A_\mathcal{V}=P^tA_{\mathcal{U}}P=\dot{\lambda}(0)P^tP=\dot{\lambda}(0)I.$$ Now, fix $\mathcal{V}=\{v_1,...,v_m\}$ an orthonormal basis of $E_\lambda$, and suppose that for any $W\in\mathbb W$, the $m$ eigenbranches of $P^k_{V+t|x|^{2\gamma}W}$ are identical. Then, for any $W\in\mathbb W$, there exists a constant that depend on $W$, $c(W)$, such that \begin{equation}
          \label{ymklmlkio}k^2\int_{\mathbb{R}^n}|x|^{2\gamma}W(x)v_i(x)v_j(x)dx=c(W)\delta_i^j.
      \end{equation}
This implies, by lemma \ref{milli}, that for any $i\neq j$, 
\[
a.e\,x \in \R^n,~ |v_i(x)|^2\,=\,|v_j(x)|^2.
\]
Then, 

\begin{equation}
\label{aqazsqa} 
a.e\,x\in\R^n, v_1(x)=\pm v_2(x).
\end{equation}
Moreover, (\ref{ymklmlkio}) implies that almost everywhere in $\R^n$, 

\begin{equation}\label{qazsqse}
     v_1(x)v_2(x)=0.
\end{equation}
Thus, (\ref{aqazsqa}) and (\ref{qazsqse}) implies that for almost every $x\in\R^n$, $\pm v_1(x)^2=0$ which implies that $v_1=0$; 
a contradiction as $v_1$ is normalized.
 \item  We prove this by induction on $m$. 
Assume first that $m=2$. by part one of this lemma, there is ${W}\in\mathbb W$ such that $P_{V+t|x|^{2\gamma}{W}}^k$ has an eigenbranch of multiplicity strictly less than $2$. This means that the two eigenbranches are simple\footnote{For eigenbranches, simple means there are no two identical eigenbranches. They may intersect at a countable set of $t$ however.}. We choose $t$ small enough so that by proposition \ref{soc}, the eigenvalues of $P_{V+t|x|^{2\gamma}W}$ are simple and in $I$. 

        Suppose this is true for $m-1$. We prove it for $m$. Using part one of this lemma, there exists a $W_0\in\mathbb W$ such that $P_{V+t|x|^{2\gamma}W_0}^k$ has an eigenbranch of multiplicity strictly less than $m$. \\ 
Now, there might be several groups of identical eigenbranches. We enumerate them as 
$$\Lambda_1=\{\lambda_1^1(t)=...=\lambda_1^{m_1}(t)\},\Lambda_2=\{\lambda_2^1(t)=...=\lambda_2^{m_2}(t)\},... $$
        where $1\leq m_i\leq m-1$ for all $1\leq i\leq \frac{m}{2}$. Now, since two analytic functions are either identical for all $t$ or intersect only on countable set, and since the eigenbranch representing $\Lambda_1$ is different than those representing $\Lambda_2$, then by the analyticity of the eigenbranches, we can choose $t_0$ small enough so that by proposition \ref{soc}, has an eigenvalue of multiplicity $m_1$, an eigenvalue of multiplicity $m_2$, etc. in $I$. \\
By the induction hypothesis, there exists $W_1\in \mathbb W$, such that the eigenvalue of multiplicity $m_1$ of $P_{V+t_0|x|^{2\gamma}W_0}$ 
will now split into $m_1$ distinct eigenbranches of $P_{V+|x|^{2\gamma}(t_0W_0+tW_1)}$ in $I$. 
Now, as these eigenbranches were split by the first place from those in $\Lambda_i$ for $2\leq i\leq \frac{m}{2}$, then again by analyticity of these eigenbranches, 
they can not , under a perturbation, come back again identical (at least they are different at $t=0$).\\ 
Now, choosing $t_1$ small enough (so that the condition of proposition \ref{soc} is now true for $t_0W_0+t_1W_1$), $P_{V+|x|^{2\gamma}(t_0W_0+t_1W_1)}$ have $m_1$ simple eigenvalues and another eigenvalues of multiplicity possibly greater than 1 in $I$.\\ 
Proceeding with the same argument for any set of identical eigenbranches for the resulting perturbed operator, we conclude in the last step that we can choose $t_{i_0}$ small enough and $W_{i_0}\in\mathbb W$ such that $P^k_{V+|x|^{2\gamma}(t_0W_0+...+t_{i_0}W_{i_0})}$ has $m$ distinct eigenvalues in $I$ (where $i_0$ is definitely less than or equal $\frac{m}{2}$).

In fact, the preceding construction of the $W's$ implies the following statement: 
$\exists i_0\leq \frac{m}{2},\exists t_1,...,t_{i_0},\forall s_j\leq t_j,j\leq i_0$, $\exists W_j(s_j-1)$, such that 
\begin{equation}
            \label{constr1}P^k_{V+|x|^{2\gamma}(s_0W_0+s_1W_1(s_0)+...+s_{i_0}W_{i_0}(s_{i_0-1}))} \text{ has simple eigenvalues in }I.
        \end{equation}
      \end{enumerate}  This concludes the proof. 
  \end{proof}
Again, as we previously explained, the preceding theorem is important in the cases where the Schr\"odinger operator doesn't have simple eigenvalues, such as considering the case where $X$ is exchanged with the two dimensional flat torus (see subsection \ref{toruss}).
\begin{lemma}\label{kentlahala}
   Fix $\gamma>0, V\in{\mathbb{V}_\gamma}$, and $k,l\in\mathbb Z^*$ such that $k^2\neq l^2$. Let $\lambda$ be a common simple eigenvalue for $P_V^k$ and $P_V^l$ (simple in both spectrums).  Then, there exists $W\in\mathbb W$, such that the eigenbranch starting from $\lambda$ of $P^k_{V+tW}$ and the eigenbranch starting from $\lambda$ of $P^l_{V+tW}$ are not identical.
   \end{lemma}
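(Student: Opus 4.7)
The plan is to invoke the Hellmann--Feynman formula (Lemma \ref{feyhel}) to compute the first derivatives at $t=0$ of both eigenbranches and then to exhibit $W\in\mathbb{W}$ for which these derivatives do not coincide. Because the perturbation $V+t|x|^{2\gamma}W$ is of type (A) in Kato's sense, both branches are real analytic in $t$ near $0$; thus two branches that agree as germs at $0$ must share their first derivative, and a discrepancy at first order is enough to conclude that the branches are not identical.

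Concretely, let $u^k$ and $u^l$ be normalized eigenfunctions of $P_V^k$ and $P_V^l$ associated with $\lambda$; the simplicity hypothesis determines them uniquely up to a phase. Let $\lambda^k(t)$ and $\lambda^l(t)$ denote the two eigenbranches starting from $\lambda$ under the perturbation $V+t|x|^{2\gamma}W$. Lemma \ref{feyhel} gives
\[
\dot\lambda^k(0)=k^2\int_\R |x|^{2\gamma}W(x)|u^k(x)|^2\,dx,\qquad \dot\lambda^l(0)=l^2\int_\R |x|^{2\gamma}W(x)|u^l(x)|^2\,dx.
\]
The branches can coincide only if $\dot\lambda^k(0)=\dot\lambda^l(0)$ for the chosen $W$, that is, only if $\int_\R W(x)g(x)\,dx=0$, where
\[
g(x):=|x|^{2\gamma}\bigl(k^2|u^k(x)|^2-l^2|u^l(x)|^2\bigr).
\]

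I would then conclude by contradiction. Suppose that no $W\in\mathbb{W}$ produces distinct first derivatives; then $\int_\R Wg=0$ for every $W\in\mathbb{W}$. The function $g$ is locally integrable (since $|x|^{2\gamma}$ is locally bounded and $|u^k|^2,|u^l|^2\in L^1(\R)$), so Lemma \ref{milli} forces $g\equiv 0$ almost everywhere. Since $|x|^{2\gamma}>0$ outside a null set, this means $k^2|u^k(x)|^2=l^2|u^l(x)|^2$ for a.e.\ $x\in\R$; integrating over $\R$ and invoking the normalizations $\|u^k\|_{\mathcal H}=\|u^l\|_{\mathcal H}=1$ yields $k^2=l^2$, contradicting the hypothesis $k^2\neq l^2$. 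Hence a suitable $W\in\mathbb{W}$ exists and, by analyticity, the two eigenbranches cannot be identical.

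The mechanism is quite elementary once Hellmann--Feynman and Lemma \ref{milli} are in place. The only point requiring care is that the perturbation must keep us inside $\mathbb{V}_\gamma$ so that the analyticity and simplicity framework applies: since $\tilde W_V\geq 1$ and $W\geq 0$, one has $\tilde W_V+tW\geq 1$ for every $t\geq 0$, so $V+t|x|^{2\gamma}W\in\mathbb{V}_\gamma$ throughout, and the eigenbranches $\lambda^k(t),\lambda^l(t)$ are well-defined analytic functions to which Lemma \ref{feyhel} genuinely applies.
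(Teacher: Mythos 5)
Your proposal is correct and follows essentially the same route as the paper: assume by contradiction that the two branches (equivalently, their first derivatives at $t=0$) coincide for every $W\in\mathbb{W}$, apply the Hellmann--Feynman formula (Lemma~\ref{feyhel}) to both branches, feed the resulting identity into Lemma~\ref{milli} to deduce $k^2|u^k|^2=l^2|u^l|^2$ a.e., and integrate using normalization to get $k^2=l^2$, a contradiction. The only cosmetic difference is that you phrase the contradiction hypothesis in terms of first derivatives rather than full branch identity, which is a slightly weaker (hence cleaner) hypothesis to contradict, and you explicitly check that $V+t|x|^{2\gamma}W$ stays in $\mathbb{V}_\gamma$; both are harmless refinements of the paper's argument.
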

    \begin{proof}
    Suppose that for any $W\in\mathbb W$, the eigenbranch of $P_{V+|x|^{2\gamma}tW}^k$ starting from $\lambda$ is identical to the eigenbranch of  $P_{V+|x|^{2\gamma}tW}^l$ starting from $\lambda$. Denote this eigenbranch by $\lambda(t)$ $(\lambda(0)=\lambda)$, and denote by $u(t,W)$ (resp. $v(t,W)$) a corresponding normalized eigenfunction branch for $P_{V+|x|^{2\gamma}tW}^k$ (resp. $P_{V+|x|^{2\gamma}tW}^l$). Then, if we denote by $E_{\lambda}^k$ and $E_{\lambda}^l$ the eigenspace that corresponds to $\lambda$ in $P_{V}^k$ and  $P_V^l$ respectively, then $u(0,W)$ and $v(0,W)$ are orthonormal basis for $E_{\lambda}^k$ and $E_{\lambda}^l$ respectively. \\ Applying  Hellmann–Feynman theorem at $t=0$, we get that \begin{equation}
    \label{akhyaakbe1} \dot{\lambda}(0)=k^2\int_\mathbb{R}|x|^{2\gamma}W(x)|u(0,W)(x)|^2dx=l^2\int_\mathbb{R}|x|^{2\gamma}W(x)|v(0,W)(x)|^2dx.
\end{equation}
Since $E_{\lambda}^k$ and $E_{\lambda}^l$ are one dimensional, then $u(0,W)$ and $v(0,W)$ are independent of $W$. This implies that for any $W\in \mathbb W$, \begin{equation}
    \label{akhyaakbe2} k^2\int_\mathbb{R}|x|^{2\gamma}W(x)|u(0)(x)|^2dx=l^2\int_\mathbb{R}|x|^{2\gamma}W(x)|v(0)(x)|^2dx.
\end{equation} By lemma \ref{milli}, (\ref{akhyaakbe2}) implies that $$k^2|x|^{2\gamma}|u(0)(x)|^2=l^2|x|^{2\gamma}|v(0)(x)|^2 \; \text{ a.e. }$$ Thus, as $u(0)$ and $v(0)$ are normalized, we get that $k=l$ which is a contradiction.
\end{proof}
We can prove now the density of $\mathcal{O}_{k,l,n}^\gamma$ in $({\mathbb{V}_\gamma},\norm{.}_{{\mathbb{V}_\gamma}})$.
\begin{theorem}
    \label{denseness} For any $\gamma>0,n\in\mathbb N$ and $k,l\in\Z^*$ fixed such that $k^2\neq l^2$, the set $\mathcal{O}_{k,l,n}^\gamma$ is dense in $({\mathbb{V}_\gamma},\norm{.}_{{\mathbb{V}_\gamma}})$.
\end{theorem}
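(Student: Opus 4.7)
The plan is to approximate any $V \in \mathbb{V}_\gamma$ by elements of $\mathcal{O}_{k,l,n}^\gamma$ through a finite sequence of small perturbations, each of which eliminates at least one common eigenvalue among the first $n$ eigenvalues of $P_V^k$ and $P_V^l$. Call a pair $(i,j) \in \{1,\ldots,n\}^2$ a \emph{bad pair} for a potential $V$ if $\lambda_i^k(V) = \lambda_j^l(V)$; then $V \in \mathcal{O}_{k,l,n}^\gamma$ is precisely the absence of bad pairs, and there are at most $n^2$ of them.

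Fix $V \in \mathbb{V}_\gamma$ and $\epsilon > 0$. The inductive step consists in constructing, from any $V_s \in \mathbb{V}_\gamma$ admitting at least one bad pair, a perturbed potential $V_{s+1} = V_s + t_{s+1}|x|^{2\gamma}W_{s+1}$ with $W_{s+1} \in \mathbb{W}_\gamma$, such that $V_{s+1}$ has strictly fewer bad pairs than $V_s$ and $\|V_{s+1}-V_s\|_{\mathbb{V}_\gamma} < \epsilon/n^2$. Since the one-dimensional Schr\"odinger operators $P_{V_s}^k$ and $P_{V_s}^l$ on the line have only simple eigenvalues, Lemma \ref{kentlahala} applies to any chosen bad pair $(i_*, j_*)$ with common value $\lambda$: there exists $W_{s+1} \in \mathbb{W}_\gamma$ such that the eigenbranch of $P^k_{V_s + t|x|^{2\gamma}W_{s+1}}$ starting from $\lambda$ is not identical to the corresponding eigenbranch of $P^l_{V_s + t|x|^{2\gamma}W_{s+1}}$. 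By the analyticity of these eigenbranches (already invoked in the excerpt via Kato's theory), the set $D \subset \mathbb{R}$ on which the two analytic functions agree is discrete.

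To prevent the perturbation from creating new bad pairs, set $\delta_s = \min\{|\lambda_i^k(V_s)-\lambda_j^l(V_s)| : (i,j) \text{ not a bad pair of } V_s\} > 0$. The quantitative continuity in Proposition \ref{jbtamnalbert} shows that for $t_{s+1} > 0$ sufficiently small, every eigenvalue $\lambda_m^\kappa(V_{s+1})$ with $\kappa \in \{k,l\}$ and $m \leq n$ differs from $\lambda_m^\kappa(V_s)$ by less than $\delta_s/3$, so every non-bad pair of $V_s$ remains non-bad. Choosing $t_{s+1} > 0$ simultaneously outside $D$, below the continuity threshold, and smaller than $(\epsilon/n^2)\|W_{s+1}\|_\infty^{-1}$, we guarantee that $V_{s+1} \in \mathbb{V}_\gamma$ (non-negativity of $W_{s+1}$ preserves the condition $\til W \geq 1$), that the chosen bad pair has been eliminated while no new one has appeared, and that the norm budget is respected.

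Iterating at most $n^2$ times produces $\til V \in \mathcal{O}_{k,l,n}^\gamma$ with $\|\til V - V\|_{\mathbb{V}_\gamma} < \epsilon$, giving density. The main technical hurdle is the joint feasibility of the three constraints at each step --- actually separating the chosen bad pair, preserving already-achieved separations, and keeping the perturbation small --- since these pull in opposite directions. Their simultaneous realization rests on three distinct ingredients: Lemma \ref{kentlahala} provides a non-degenerate direction $W_{s+1}$, the analyticity of eigenbranches reduces the obstruction to a discrete set $D$, and Proposition \ref{jbtamnalbert} provides the quantitative continuity bounding how far eigenvalues move. The reduction to an iteration of at most $n^2$ steps, rather than having to treat all bad pairs with a single ingeniously chosen $W$, is what makes this tractable.
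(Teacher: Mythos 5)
Your proof is correct and follows essentially the same route as the paper's: perturb by a finite sequence of small moves, using Lemma \ref{kentlahala} (via Hellmann--Feynman and Lemma \ref{milli}) to split a common eigenvalue, analyticity of eigenbranches to guarantee the split persists for small nonzero $t$, and continuity of the spectrum to keep already-separated pairs separated. Your bookkeeping is cleaner in two ways: you correctly observe that on the line the one-dimensional Schr\"odinger operators $P_V^k$ always have simple spectrum, so Lemma \ref{simpofeb} (which the paper invokes ``for the general case'') is never actually needed here, and your explicit ``bad pair'' counter with budget $\epsilon/n^2$ replaces the paper's nested-quantifier iteration with a tighter and easier-to-audit induction on the number of coincidences. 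Each approach terminates in finitely many steps and uses the same three ingredients (splitting direction, analyticity, quantitative continuity); yours is simply better organized for the setting at hand.
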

\begin{proof}
  Let $V\in{\mathbb{V}_\gamma}$ with $V\notin \mathcal{O}_{k,l,n}^\gamma$. Require to prove that
  \begin{equation}
          \label{sdfdsdfds} \forall \epsilon>0, \exists \til V_\epsilon\in \mathcal{O}_{k,l,n}^\gamma  \text{ such that }   \norm{V-\til V_\epsilon}_{{\mathbb{V}_\gamma}}<\epsilon.
        \end{equation}  \\
        We know that $V\notin \mathcal{O}_{k,l,n}^\gamma$, so there is some common eigenvalues between $P_V^k$ and $P_V^l$ among the first $n$ eigenvalues of each,
        some of which maybe be with multiplicity greater than one. Take a common eigenvalue $\lambda$ of multiplicity $m_1$ and $m_2$ in $P_V^k$ and $P_V^l$ respectively. Recall that in this setting, $m_1=m_2=1$, but we give the proof for the general case (see paragraph before lemma \ref{simpofeb}).
        Let $$\kappa=\min\{\text{dist}(\lambda,\text{spec}(P_V^k)),\text{dist}(\lambda,\text{spec}(P_V^l))\},$$ and denote by
        $I=]\lambda-\frac{\kappa}{2},\lambda+\frac{\kappa}{2}[$.
        Using (\ref{constr1}), we obtain the following
        \begin{gather*}
          \exists i_0\leq \frac{m_1}{2} (\text{ in } \mathbb N^*),\exists t_0,...,t_{i_0};\forall s_j\leq t_j,0\leq j\leq i_0,\exists W_j=W_j(s_{j-1})\in\mathbb W\\
          \text{such that}~~P_{V+s_0W_0+...+s_{i_0}W_{i_0}}^k \text{ has $m_1$ simple eigenvalues in } I.
        \end{gather*}  

Now, again we do the same to obtain $m_2$ simple eigenvalues corresponding to $l$: 
\begin{gather*}
\exists i_1\leq \frac{m_1}{2}+\frac{m_2}{2} (\text{ in } \mathbb N^*),\exists t_{0},...,t_{i_1};\forall s_j\leq t_j,0\leq j\leq i_1,\exists W_j:=W_j(s_{j-1})\in\mathbb W;\\ 
\text{such that} ~~P_{V+s_0W_0+...+s_{i_1}W_{i_1}}^l \text{ has $m_2$ simple eigenvalues in } I.
\end{gather*}

By stability of simple eigenvalues under small perturbations, we get 
\begin{gather*}
\exists i_1\leq \frac{m_1}{2}+\frac{m_2}{2} (\text{ in } \mathbb N^*),\exists \til t_{0},...,\til t_{i_1};\forall s_j\leq \til t_j,0\leq j\leq i_1,
\exists W_j:=W_j(s_{j-1})\in\mathbb W;\\  
\text{such that}~~P_{V+s_0W_0+...+s_{i_1}W_{i_1}}^k \text{ and } P_{V+s_0W_0+...+s_{i_1}W_{i_1}}^l \text{ has $m_1$ and $m_2$ simple eigenvalues in } I \text{ resp. }.
\end{gather*}

Now, suppose among these simple eigenvalues there are $m$ common eigenvalues. Then, using now lemma \ref{kentlahala},we obtain 
\begin{gather*}
\exists i_2\leq \frac{m_1}{2}+\frac{m_2}{2}+m (\text{ in } \mathbb N^*),\exists \til t_{0},...,\til t_{i_2};\forall s_j\leq \til t_j,0\leq j\leq i_2,
\exists W_j:=W_j(s_{j-1})\in\mathbb W;\\
\text{such that}~~P_{V+s_0W_0+...+s_{i_2}W_{i_2}}^k \text{ and } P_{V+s_0W_0+...+s_{i_2}W_{i_2}}^l \text{ has no common eigenvalues in } I \text{ resp. }.
\end{gather*}

Now, this is true for any $\lambda$ in common between $P_V^k$ and $P_V^l$. Now, applying the same argument for the second common eigenvalue, then the third, etc..., 
it yields the following: 

\begin{gather*}
\exists \varsigma\in\mathbb N^*,\exists t_0,...,t_{\varsigma},\forall s_j\leq t_j,j\leq \varsigma, ~\exists W_j=W_j(s_j-1),\\
\text{such that}~~\left(\bigcap_{r=k,l}\text{spec}_n\left(P^r_{V+|x|^{2\gamma}(s_0W_0+s_1W_1(s_0)+...+s_{\varsigma}W_{\varsigma}(s_{\varsigma-1}))}\right)\right)=\emptyset.
\end{gather*}

Observe that, after dealing with the first intersection, we can freely deal with the next one; the eigenbranches that will be extracted from the simple 
(with respect to $k$ or $l$) disjoint (with respect to $k$ and $l$) eigenvalue we dealt with in the previous step won't come back identical because 
they were split at the first place (by analyticity). 

So, what we do to conclude, is that for any $j\leq \varsigma$, we choose $s_j$ small enough so that 
$$|s_j|\norm{|x|^{2\gamma}W_j(s_{j-1})}_{\mathcal{1}}<\dfrac{\epsilon}{\varsigma}.$$ T
his construction implies (\ref{sdfdsdfds}). 
\end{proof}

Finally, We prove the density of $\mathbb V_\gamma^g$ in $({\mathbb{V}_\gamma},\norm{.}_{{\mathbb{V}_\gamma}})$. 
\subsubsection{Density Of $\mathbb V_\gamma^g$ And Main Result} We define a Baire space and state Baire's category theorem.
\begin{definition}\label{Bairespace}
    A topological space is called a Baire space if every countable intersection of dense open sets is dense.
\end{definition}
\begin{lemma}[Baire's Category Theorem]\label{bct}
    Every completely metrizable topological space is a Baire space. 
    \end{lemma}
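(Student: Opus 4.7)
The plan is to give the classical nested-balls proof, leveraging completeness of the metric to convert a descending sequence of closed balls with shrinking radii into a point of the target intersection. Fix a complete metric $d$ inducing the topology of $X$ and a countable family $(U_n)_{n\in\mathbb N}$ of dense open subsets. To prove $\bigcap_n U_n$ is dense, I would take an arbitrary nonempty open set $V\subset X$ and produce a point in $V\cap\bigcap_n U_n$.

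First, I would construct inductively a sequence of closed balls $\overline{B(x_n,r_n)}$ with $r_n<1/n$ satisfying $\overline{B(x_{n+1},r_{n+1})}\subset B(x_n,r_n)\cap U_{n+1}$. The base case uses that $V\cap U_1$ is open and nonempty (by density of $U_1$), hence contains a closed ball $\overline{B(x_1,r_1)}$ with $r_1<1$. For the inductive step, density of $U_{n+1}$ together with openness of $B(x_n,r_n)$ makes $B(x_n,r_n)\cap U_{n+1}$ open and nonempty, so it contains a closed ball of radius less than $1/(n+1)$.

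Second, I would verify that the centers $(x_n)$ form a Cauchy sequence: for $m\geq n$, $x_m\in\overline{B(x_n,r_n)}$, hence $d(x_m,x_n)\leq r_n<1/n$. By completeness, $x_n\to x$ for some $x\in X$. Since the tail $(x_m)_{m\geq n}$ lies in the closed ball $\overline{B(x_n,r_n)}$, the limit $x$ belongs to each such ball, hence to every $U_n$ by construction of the balls, and also to $V$ through $\overline{B(x_1,r_1)}$. This furnishes the required point.

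The only subtlety, which I would flag as the main point to handle with care, is the inductive shrinking: one must choose at each step a \emph{closed} ball strictly inside the previous \emph{open} ball intersected with $U_{n+1}$, so that the Cauchy limit ends up in the open set $U_n$ rather than merely in its closure. This is not a real obstacle since $B(x_n,r_n)\cap U_{n+1}$ is open, but I would be explicit about it to make the final containment $x\in U_n$ automatic.
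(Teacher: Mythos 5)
The paper does not actually prove this lemma; it simply refers to a textbook citation. Your proposal, by contrast, supplies the classical nested-balls argument in full, and it is correct: fixing a complete compatible metric, you build a descending chain of closed balls $\overline{B(x_{n+1},r_{n+1})}\subset B(x_n,r_n)\cap U_{n+1}$ with $r_n\to 0$, observe that the centers are Cauchy (since $x_m\in \overline{B(x_n,r_n)}$ for $m\geq n$), and let the limit $x$ inherit membership in every $\overline{B(x_n,r_n)}$, hence in each $U_n$ and in the starting open set $V$. The one point you rightly flag — choosing a \emph{closed} ball strictly inside the previous \emph{open} ball, so the limit lands in $U_n$ rather than its closure — is exactly where careless versions of this argument go wrong, and you handle it. A minor remark: the inductive step works in any metric space because a nonempty open set always contains a closed ball of arbitrarily small radius (take $y$ in the set with $B(y,\epsilon)$ inside it; then $\overline{B(y,\epsilon/2)}\subset B(y,\epsilon)$); being explicit about this would close the last small gap, but it is standard and your argument is sound. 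Since the paper only cites, your write-up is strictly more informative and could replace the citation if a self-contained exposition were wanted.
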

    \begin{proof}
        Refer to \cite{zbMATH03479763} for a proof.
    \end{proof}
\begin{corollary}\label{appobct}
    For $V=|x|^{2\gamma}W\in{\mathbb{V}_\gamma}$, we recall the norm $\norm{V}_{{\mathbb{V}_\gamma}}:=\norm{W}_\mathcal{1}$. For any $\gamma>0$, the space ${\mathbb{V}_\gamma}$ equipped with the norm $\norm ._{\mathbb{V}_\gamma}$ is a Baire space.
    \end{corollary}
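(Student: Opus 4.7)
The plan is to reduce this corollary to Baire's category theorem (Lemma \ref{bct}) by exhibiting $(\mathbb{V}_\gamma,\|\cdot\|_{\mathbb{V}_\gamma})$ as a complete metric space. The natural approach is to identify $\mathbb{V}_\gamma$ with a subset of the Banach space $\mathcal{C}^0_b(\mathbb R)$ equipped with the sup norm via the bijection $V=|x|^{2\gamma}W \longleftrightarrow W$; under this identification the norm $\|V\|_{\mathbb{V}_\gamma}=\|W\|_{\infty}$ is precisely the restriction of the sup norm, and $\mathbb{V}_\gamma$ corresponds to
\[
\mathcal{A}\,:=\,\{W\in\mathcal{C}^0_b(\mathbb R)\,:\,W\geq 1\}.
\]

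The first step is thus to verify that this correspondence is indeed a bijection and an isometry between $(\mathbb{V}_\gamma,\|\cdot\|_{\mathbb{V}_\gamma})$ and $(\mathcal{A},\|\cdot\|_{\infty})$, which is immediate from the definitions. The second step is to show that $\mathcal{A}$ is closed in $(\mathcal{C}^0_b(\mathbb R),\|\cdot\|_{\infty})$: if $(W_n)_{n\geq 1}\subset\mathcal{A}$ converges uniformly to some $W\in\mathcal{C}^0_b(\mathbb R)$, then the pointwise inequality $W_n(x)\geq 1$ passes to the limit, giving $W\geq 1$ and hence $W\in\mathcal{A}$.

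Since $\mathcal{C}^0_b(\mathbb R)$ is a Banach space (so a complete metric space), any closed subset inherits a complete metric structure from the ambient sup norm. In particular $(\mathcal{A},\|\cdot\|_{\infty})$, and therefore $(\mathbb{V}_\gamma,\|\cdot\|_{\mathbb{V}_\gamma})$, is a complete metric space, i.e. a completely metrizable topological space. Applying Baire's category theorem (Lemma \ref{bct}) directly yields that $(\mathbb{V}_\gamma,\|\cdot\|_{\mathbb{V}_\gamma})$ is a Baire space, which is the desired conclusion.

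I do not expect any substantial obstacle here: the only minor subtlety is that $\mathbb{V}_\gamma$ is not a linear subspace of the vector space $\{V=|x|^{2\gamma}\tilde W,\tilde W\in\mathcal{C}^0_b(\mathbb R)\}$ (because the condition $\tilde W\geq 1$ is not stable under linear combinations), so one must argue via a closed subset of a Banach space rather than via a Banach subspace — but this causes no difficulty since completeness only requires closedness, not linearity.
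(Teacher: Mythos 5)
Your proof is correct and follows essentially the same route as the paper: both reduce the claim to completeness of $(\mathcal{C}^0_b(\mathbb R),\|\cdot\|_\infty)$ via the isometric identification $V=|x|^{2\gamma}W \leftrightarrow W$, then invoke Baire's category theorem. Your presentation via ``$\mathcal{A}=\{W\geq 1\}$ is a closed subset of a Banach space'' is a slightly tidier packaging, and you explicitly verify that $W\geq 1$ survives passage to the uniform limit — a point the paper's proof dismisses as ``clearly in $\mathbb{V}_\gamma$'' — but this is the same argument, not a genuinely different one.
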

    \begin{proof}
      We prove that the normed space $({\mathbb{V}_\gamma},\norm._{{\mathbb{V}_\gamma}})$ is complete. \\
      Let $(V_n)_{n\geq1}=(|x|^{2\gamma}W_n)_{n\geq1}$ be a Cauchy sequence in $({\mathbb{V}_\gamma},\norm._{{\mathbb{V}_\gamma}})$. 
Then, for any $\epsilon>0$, there exist $n_0\in\mathbb N$ such that for all $m,n\geq n_0$, we have 
$$\norm{V_n-V_m}_{{\mathbb{V}_\gamma}}<\epsilon.$$ 
This implies that for any $\epsilon>0$, there exist $n_0\in\mathbb N$ such that for all $m,n\geq n_0$, 
$$\norm{W_n-W_m}_\mathcal{1}=\norm{V_n-V_m}_{{\mathbb{V}_\gamma}}<\epsilon,$$ 
which gives that $(W_n)_{n\geq1}$ is a cauchy sequence in $(\mathcal{C}^0_b(\mathbb R),\norm._{\mathcal{1}})$ which is a complete space. 
So, $W_n$ converges to some $W$ in $\mathcal{C}^0_b(\mathbb R)$ uniformly. Finally, we get, for $V=|x|^{2\gamma}W$ 
(which is clearly in ${\mathbb{V}_\gamma}$), that $$\norm{V_n-V}_{{\mathbb{V}_\gamma}}=\norm{W_n-W}_\mathcal{1}\rightarrow0.$$
 We conclude that $(V_n)_{n\geq1}$ is convergent in $({\mathbb{V}_\gamma},\norm._{{\mathbb{V}_\gamma}})$ and therefore the space $({\mathbb{V}_\gamma},\norm._{{\mathbb{V}_\gamma}})$ is a complete metric space (metric induced from norm). Using lemma \ref{bct}, $({\mathbb{V}_\gamma},\norm._{{\mathbb{V}_\gamma}})$ is a Baire space.
    \end{proof}  
We deduce the density of $\mathbb{V}_\gamma^g$ in $({\mathbb{V}_\gamma},\norm._{{\mathbb{V}_\gamma}})$:
\begin{theorem}\label{mric1}
  For any $\gamma>0$ fixed, the set $\mathbb{V}_\gamma^g$ is dense in $({\mathbb{V}_\gamma},\norm ._{{\mathbb{V}_\gamma}})$.
\end{theorem}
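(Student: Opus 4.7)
The plan is to invoke the Baire category theorem directly, since by the preceding results $\mathbb{V}_\gamma^g$ is, by construction, a countable intersection of open dense subsets of a complete metric space.

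First, I would recall the identity
\[
\mathbb{V}_\gamma^g\,=\,\bigcap_{\substack{k,l\in\mathbb Z^*\\ k^2\neq l^2}}\bigcap_{n\in\mathbb N}\mathcal{O}_{k,l,n}^\gamma,
\]
and emphasize that this intersection is indexed by a countable set: the pairs $(k,l)\in(\mathbb Z^*)^2$ with $k^2\neq l^2$, together with $n\in\mathbb N$, form a countable family. This is the crucial combinatorial point that allows the Baire-type argument to go through.

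Next, I would observe that each piece of the intersection satisfies the hypotheses of Baire's theorem. By Theorem \ref{openness}, every $\mathcal{O}_{k,l,n}^\gamma$ is open in $(\mathbb{V}_\gamma,\|\cdot\|_{\mathbb{V}_\gamma})$, and by Theorem \ref{denseness}, every $\mathcal{O}_{k,l,n}^\gamma$ is dense in $(\mathbb{V}_\gamma,\|\cdot\|_{\mathbb{V}_\gamma})$. These two statements together are exactly what is needed to conclude density of the intersection, once we have the Baire property of the ambient space.

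Finally, Corollary \ref{appobct} establishes that $(\mathbb{V}_\gamma,\|\cdot\|_{\mathbb{V}_\gamma})$ is a complete metric space, hence a Baire space by Lemma \ref{bct}. Applying the Baire category theorem (Definition \ref{Bairespace}) to the countable family $\{\mathcal{O}_{k,l,n}^\gamma\}$, the intersection $\mathbb{V}_\gamma^g$ is dense in $(\mathbb{V}_\gamma,\|\cdot\|_{\mathbb{V}_\gamma})$, which is the desired conclusion. There is no substantive obstacle at this stage: all the hard work has been invested in the openness (continuity of eigenvalues under $\mathbb{V}_\gamma$-perturbations) and the density (the Hellmann--Feynman analysis combined with Lemma \ref{milli} and the multiple-perturbation construction). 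The present theorem is simply the synthesis: countable intersection of open dense sets in a complete metric space is dense.
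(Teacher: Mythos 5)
Your proposal is correct and follows essentially the same route as the paper's proof: invoke Theorems \ref{openness} and \ref{denseness} for openness and density of each $\mathcal{O}_{k,l,n}^\gamma$, use Corollary \ref{appobct} (via Lemma \ref{bct}) to get the Baire property of $(\mathbb{V}_\gamma,\|\cdot\|_{\mathbb{V}_\gamma})$, and conclude by taking the countable intersection. The only addition you make is to explicitly flag the countability of the index set, which the paper leaves implicit but is indeed the point that makes Baire applicable.
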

 \begin{proof}
     By theorem \ref{openness} and theorem \ref{denseness}, $\mathcal{O}_{k,l,n}^\gamma$ is open and dense in ${\mathbb{V}_\gamma}$. Then, since $({\mathbb{V}_\gamma},\norm ._{{\mathbb{V}_\gamma}})$ is a Baire space by corollary \ref{appobct}, the intersection of $\mathcal{O}_{k,l,n}^\gamma$ which is equal to $\mathbb{V}_\gamma^g$ is dense in $({\mathbb{V}_\gamma},\norm ._{{\mathbb{V}_\gamma}})$.
 \end{proof}
 Recall that we say that a subset is residual in a metric space if it is the countable intersection of open dense sets, and it is dense. Our main result is a direct corollary of theorems \ref{openness}, \ref{denseness} and \ref{mric1}:
\begin{corollary}
    \label{finalll} For any $\gamma>0$ fixed, the set $\mathbb{V}_\gamma^g$ is residual in $({\mathbb{V}_\gamma},\norm ._{{\mathbb{V}_\gamma}})$.
\end{corollary}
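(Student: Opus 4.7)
The plan is essentially to assemble the previously established pieces, since the statement is a direct corollary rather than an independent result. I would start by observing that the index set over which the intersection $\mathbb{V}_\gamma^g = \bigcap_{k,l,n} \mathcal{O}_{k,l,n}^\gamma$ is taken, namely $\{(k,l,n) \in \mathbb{Z}^* \times \mathbb{Z}^* \times \mathbb{N} : k^2 \neq l^2\}$, is countable. This is the key structural fact that justifies the terminology ``residual'', since otherwise we could only speak of an arbitrary intersection of open dense sets and Baire would not apply.

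Next I would invoke Theorem \ref{openness} to say that each $\mathcal{O}_{k,l,n}^\gamma$ is open in $(\mathbb{V}_\gamma, \norm{\cdot}_{\mathbb{V}_\gamma})$, and Theorem \ref{denseness} to say that each $\mathcal{O}_{k,l,n}^\gamma$ is dense in the same space. Combined with the countability observation above, this already shows that $\mathbb{V}_\gamma^g$ is a countable intersection of open dense sets, which is half of the definition of residual.

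For the remaining half, namely density, I would simply cite Theorem \ref{mric1}, which establishes precisely that $\mathbb{V}_\gamma^g$ is dense in $(\mathbb{V}_\gamma, \norm{\cdot}_{\mathbb{V}_\gamma})$. That theorem was itself derived from the Baire category theorem (Lemma \ref{bct}) via Corollary \ref{appobct}, which provides completeness of $(\mathbb{V}_\gamma, \norm{\cdot}_{\mathbb{V}_\gamma})$, so no new argument is required here.

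There is no real obstacle in this proof: all of the genuine work was carried out in the openness theorem, the density theorem (which uses the Hellmann--Feynman machinery, Lemma \ref{simpofeb}, and Lemma \ref{kentlahala}), and the Baire space verification. The corollary is a purely formal combination. The only minor point worth stating explicitly in the write-up is the countability of the index set, since the definition of residual requires a \emph{countable} intersection; the rest is a one-line synthesis of Theorems \ref{openness}, \ref{denseness}, and \ref{mric1}.
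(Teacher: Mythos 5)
Your proposal is correct and matches the paper's argument exactly: the corollary is stated as a formal synthesis of Theorems \ref{openness}, \ref{denseness}, and \ref{mric1}, with residuality following from the countability of the index set $\{(k,l,n)\}$. Flagging that countability explicitly, as you do, is a sensible addition but does not change the substance.
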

\subsection{Validity Of Result On Torus}\label{toruss}
    The same results hold on a torus. We only introduce the set up, as all the arguments will be almost the same.

     Denote by $\mathbb T^2=\mathbb S^1\times \mathbb S^1$ the two dimensional flat torus with fundamental domain $[-\pi,\pi]_x\times[-\pi,\pi]_y$. For any function $f:\mathbb T^2\rightarrow\mathbb R$ (or $\mathbb C$), there corresponds one, and only one ($2\pi$-)periodic function $\til f:\mathbb R^2\rightarrow \mathbb R$ (or $\mathbb C$) given by $\til f(x,y)=f(e^{ix},e^{iy})$. 
     
     Recall that on the infinite cylinder, excluding the functions that does not depend on $y$ was necessary for $P_V$ to have discrete spectrum. Here, this is not an issue; the torus is a compact manifold and so the subelliptic operator will always have discrete spectrum. For this, we consider the space $$L^2(\mathbb T^2)=L^2(\mathbb T^2,\mathbb R)=\left\{f:\mathbb T^2\rightarrow \mathbb R;\norm{f}_{L^2(\mathbb T^2)}:=\norm{\til f}_{L^2([-\pi,\pi])}<\mathcal{1}\right\}.$$ 
     
     Now, observe that having a compact support for $W$ was important so that the coefficient $|x|^{2\gamma}$ wouldn't cause any problem when integrating over $\mathbb R$. Here however, as we work on $\mathbb S^1$, this is not longer an issue. For this, we denote by $\mathcal{C}^0(\mathbb S^1)$ the space of continuous $2\pi$-periodic functions on $\mathbb R$; $$\mathcal{C}^0(\mathbb S^1)=\{f:\mathbb S^1\rightarrow \mathbb R;\til f\in\mathcal{C}^0(\mathbb R)\}.$$
     We define the Baouendi Grushin operator on $\mathbb T^2$ for $V\in \mathcal{C}^0(\mathbb S^1)$ that looks like $|x|^{2\gamma}$ near $0$,
     and vanishes nowhere in $]-\pi,\pi[$ but on $0$. A typical example would be $V(x)=(4\sin^2\left(\frac{x}{2}\right))^{\frac{\gamma}{2}}$.
     For this, it is reasonable to define the set
     $$\til{\mathbb V}_\gamma=\left\{V(x)=\left(4\sin^2\left(\frac{x}{2}\right)\right)^{\frac{\gamma}{2}} W(x); W\in\mathcal{C}^0(\mathbb S^1),W\geq1\right\}.$$
     On $\til{\mathbb{V}_\gamma}$, we put the following norm: for $V(x)=\left(4\sin^2\left(\frac{x}{2}\right)\right)^{\frac{\gamma}{2}} W(x) \in\til{\mathbb V}_\gamma$,
     $\norm{V}_{\til{\mathbb V}_\gamma}=\norm{W}_\mathcal{1}$.

     For $V\in\til{\mathbb V}_\gamma,$ we denote by $\til P_V$ the operator $\til P_V=-\dl_x^2-V(x)\dl_y^2,$ defined on
     $$\til D=\{u\in L^2(\mathbb T^2);\dl_x^2u\in L^2(\mathbb T^2),V(x)\dl_y^2\in L^2(\mathbb T^2)\}.$$
     Take a strip of the torus, $$w=[-\pi,\pi]_x\times[a,b]_y\subset[-\pi,\pi]_x\times[-\pi,\pi]_y.$$

     Finally, we define, for $V\in\til{\mathbb V}_\gamma,$ the one dimensional Schr\"odinger operator as
     $\til P_V^k=-\dl_x^2-k^2V(x),$ with domain $$\til{\mathbb D}_V=\{u\in H^1(\mathbb S^1); V^{\frac{1}{2}}u\in L^2(\mathbb S^1)\}.$$

     In this setting we obtain the following theorem.
     \begin{theorem}
       For a generic $V\in \til{\mathbb{V}}_\gamma$, the spectrum of the corresponding Baouendi-Grushin operator on the torus has multiplicity at most $2$
       and, for any strip $w$, it satisfies the concentration inequality for eigenfunctions.
     \end{theorem}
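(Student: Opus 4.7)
The plan is to transpose the proofs of Theorem \ref{conv} and Corollary \ref{finalll} from the cylinder to the torus. First, I would redo the separation of variables on $\mathbb T^2$: since $\til P_V$ commutes with $\dl_y^2$, there is a Hilbert basis of $L^2(\mathbb T^2)$ consisting of joint eigenfunctions $\varphi(x)e^{iky}$, $k\in\mathbb Z$, where $\varphi$ is an eigenfunction of $\til P_V^k$. The novelty compared with the cylinder is that $k=0$ is now allowed (the torus being compact, $\til P_V$ has compact resolvent without having to remove the $y$-independent functions), and the $k=0$ sector contributes the deterministic spectrum $\{m^2;\,m\in\mathbb N\}$ of $-\dl_x^2$ on $\mathbb S^1$, with eigenfunctions $e^{\pm imx}$.

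Next, I would run the Baire argument, replacing property (P) by the stronger property (P$'$): (a) each $\til P_V^k$, $k\in\mathbb Z^*$, has simple eigenvalues; (b) $\mathrm{spec}(\til P_V^k)\cap\mathrm{spec}(\til P_V^l)=\emptyset$ whenever $k,l\in\mathbb Z^*$ and $k^2\neq l^2$; (c) $\mathrm{spec}(\til P_V^k)\cap\{m^2;\,m\in\mathbb N\}=\emptyset$ for every $k\in\mathbb Z^*$. The set of $V\in\til{\mathbb V}_\gamma$ satisfying (P$'$) is a countable intersection of open sets, each of which I would show to be open and dense. Openness follows from the continuity estimate of Proposition \ref{jbtamnalbert}, whose proof transcribes verbatim. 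Density for (b) is exactly Theorem \ref{denseness}; density for (a) is provided by Lemma \ref{simpofeb}, whose validity on $\mathbb S^1$ is already noted in the paper; density for (c) follows from a direct Hellmann--Feynman perturbation, since perturbing $V\mapsto V+t|x|^{2\gamma}W$ moves $\mathrm{spec}(\til P_V^k)$ with a non-trivial derivative while the deterministic set $\{m^2\}$ stays fixed. Completeness of $(\til{\mathbb V}_\gamma,\|\cdot\|_{\til{\mathbb V}_\gamma})$ is verified exactly as in Corollary \ref{appobct}, so Baire's theorem yields a residual set of good potentials.

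Finally, for any good $V$, an eigenfunction of $\til P_V$ is either of the form $\varphi_{k,\ell}(x)(\alpha e^{iky}+\beta e^{-iky})$ with $k\in\mathbb Z^*$ and $\varphi_{k,\ell}$ a simple eigenfunction of $\til P_V^k$, giving a two-dimensional eigenspace; or it lies in the $k=0$ sector, where the eigenfunction is a $y$-independent combination of $e^{\pm imx}$. In the first case the computation of Theorem \ref{conv} applies verbatim (the $y$-integration still runs over $[-\pi,\pi]$, which is the only thing that mattered). In the second case one has the trivial identity $\|\phi\|_{L^2(w)}^2/\|\phi\|_{L^2(\mathbb T^2)}^2=(b-a)/(2\pi)$. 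Thus every eigenspace of $\til P_V$ has dimension at most $2$, the ratio converges to $(b-a)/(2\pi)>0$ as $E\to\infty$, and the contradiction argument of Theorem \ref{conv} delivers the concentration inequality.

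The main obstacle is condition (a): on $\mathbb S^1$ the one-dimensional Schr\"odinger operator may have double eigenvalues, unlike on $\mathbb R$ where simplicity was automatic. This is precisely what Lemma \ref{simpofeb} was designed to handle, so the circle case reduces to verifying that the Albert-type perturbation argument carries over to $\mathbb S^1$ (which the paper already indicates). Apart from this point and the minor bookkeeping for the $k=0$ sector, everything is a mechanical transcription of the cylinder arguments.
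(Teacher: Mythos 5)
Your proposal is correct and follows the paper's intended route — the paper itself gives no proof, stating only that the cylinder arguments transpose. You have helpfully made explicit what the paper leaves implicit: the treatment of the $k=0$ sector with its fixed spectrum $\{m^2\}$ and $y$-independent eigenfunctions $e^{\pm imx}$ (for which the concentration ratio is trivially $(b-a)/(2\pi)$), the extra disjointness condition (c) needed so that $\{m^2\}$ does not collide with $\mathrm{spec}(\til P^k_V)$ for $k\neq 0$, and the role of Lemma \ref{simpofeb} on $\mathbb S^1$ (condition (a)) since the one-dimensional Schr\"odinger operator on the circle need not have simple eigenvalues.
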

     
\bibliographystyle{plain}
\bibliography{biblio.bib}

\begin{thebibliography}{10}

\bibitem{albert1971topology}
Jeffrey~Hugh Albert.
\newblock {\em Topology of the nodal and critical point sets for eigenfunctions
  of elliptic operators}.
\newblock PhD thesis, Massachusetts Institute of Technology, 1971.

\bibitem{albert1975genericity}
Jeffrey~Hugh Albert.
\newblock Genericity of simple eigenvalues for elliptic{ PDE}’s.
\newblock {\em Proceedings of the American Mathematical Society},
  48(2):413--418, 1975.

\bibitem{ballentine2014quantum}
Leslie~E. Ballentine.
\newblock {\em Quantum mechanics: a modern development}.
\newblock World Scientific Publishing Company, 2014.

\bibitem{bardos1992sharp}
Claude Bardos, Gilles Lebeau, and Jeffrey Rauch.
\newblock Sharp sufficient conditions for the observation, control, and
  stabilization of waves from the boundary.
\newblock {\em SIAM journal on control and optimization}, 30(5):1024--1065,
  1992.

\bibitem{bourgain2012restriction}
Jean Bourgain and Ze{\'e}v Rudnick.
\newblock Restriction of toral eigenfunctions to hypersurfaces and nodal sets.
\newblock {\em Geometric and Functional Analysis}, 22:878--937, 2012.

\bibitem{zbMATH05173335}
N.~Burq, P.~G{\'e}rard, and N.~Tzvetkov.
\newblock Restrictions of the {Laplace}-{Beltrami} eigenfunctions to
  submanifolds.
\newblock {\em Duke Math. J.}, 138(3):445--486, 2007.

\bibitem{burq2004smoothing}
Nicolas Burq.
\newblock Smoothing effect for {S}chr{\"o}dinger boundary value problems.
\newblock {\em Duke Mathematical Journal}, 123(2):403, 2004.

\bibitem{burq2022time}
Nicolas Burq and Chenmin Sun.
\newblock Time optimal observability for {G}rushin {S}chr{\"o}dinger equation.
\newblock {\em Analysis \& PDE}, 15(6):1487--1530, 2022.

\bibitem{burq2004geometric}
Nicolas Burq and Maciej Zworski.
\newblock Geometric control in the presence of a black box.
\newblock {\em Journal of the American Mathematical Society}, 17(2):443--471,
  2004.

\bibitem{zbMATH06194652}
Nicolas Burq and Maciej Zworski.
\newblock Control for {Schr{\"o}dinger} operators on tori.
\newblock {\em Math. Res. Lett.}, 19(2):309--324, 2012.

\bibitem{dermenjian2022concentration}
Yves Dermenjian, Matania Ben-Artzi, and Assia Benabdallah.
\newblock Concentration and non-concentration of eigenfunctions of second-order
  elliptic operators in layered media.
\newblock {\em arXiv preprint arXiv:2212.05872}, 2022.

\bibitem{zbMATH01339094}
Mouez Dimassi and Johannes Sj{\"o}strand.
\newblock {\em Spectral asymptotics in the semi-classical limit}, volume 268 of
  {\em Lond. Math. Soc. Lect. Note Ser.}
\newblock Cambridge: Cambridge University Press, 1999.

\bibitem{hassell2009eigenfunction}
Andrew Hassell, Luc Hillairet, and Jeremy Marzuola.
\newblock Eigenfunction concentration for polygonal billiards.
\newblock {\em Communications in Partial Differential Equations},
  34(5):475--485, 2009.

\bibitem{zbMATH06136470}
Bernard Helffer.
\newblock {\em Spectral theory and its applications}, volume 139 of {\em Camb.
  Stud. Adv. Math.}
\newblock Cambridge: Cambridge University Press, 2013.

\bibitem{hislop2012introduction}
Peter~D Hislop and Israel~Michael Sigal.
\newblock {\em Introduction to spectral theory: With applications to
  {S}chr{\"o}dinger operators}, volume 113.
\newblock Springer Science \& Business Media, 2012.

\bibitem{Jaffard90}
S.~Jaffard.
\newblock Contr\^{o}le interne exact des vibrations d'une plaque rectangulaire.
\newblock {\em Portugal. Math.}, 47(4):423--429, 1990.

\bibitem{kato2013perturbation}
Tosio Kato.
\newblock {\em Perturbation theory for linear operators}, volume 132.
\newblock Springer Science \& Business Media, 2013.

\bibitem{zbMATH03479763}
John~L. Kelley.
\newblock {\em General topology. 2nd ed}, volume~27 of {\em Grad. Texts Math.}
\newblock Springer, Cham, 1975.

\bibitem{koenig2017non}
Armand Koenig.
\newblock Non-null-controllability of the {G}rushin operator in 2d.
\newblock {\em Comptes Rendus Mathematique}, 355(12):1215--1235, 2017.

\bibitem{lebeau1992controle}
Gilles Lebeau.
\newblock Contr{\^o}le de l'{\'e}quation de {S}chr{\"o}dinger.
\newblock {\em Journal de Math{\'e}matiques Pures et Appliqu{\'e}es},
  71(3):267--291, 1992.

\bibitem{letrouit2021equations}
Cyril Letrouit.
\newblock {\em Equations sous-elliptiques: contr{\^o}le, singularit{\'e}s et
  th{\'e}orie spectrale}.
\newblock PhD thesis, Sorbonne universit{\'e}, 2021.

\bibitem{Marzuola2006}
Jeremy Marzuola.
\newblock Eigenfunctions for partially rectangular billiards.
\newblock {\em Comm. Partial Differential Equations}, 31(4-6):775--790, 2006.

\bibitem{miller2012resolvent}
Luc Miller.
\newblock Resolvent conditions for the control of unitary groups and their
  approximations.
\newblock {\em Journal of Spectral Theory}, 2(1):1--55, 2012.

\bibitem{pankov2001introduction}
A.~Pankov.
\newblock Introduction to spectral theory of {S}chr{\"o}dinger operators.
\newblock {\em Science Direct Working Paper}, (S1574-0358):04, 2001.

\bibitem{comm}
{Paolo Glorioso}.
\newblock On common eigenbases of commuting operators.
\newblock [Online].

\bibitem{uhlenbeck1976generic}
Karen Uhlenbeck.
\newblock Generic properties of eigenfunctions.
\newblock {\em American Journal of Mathematics}, 98(4):1059--1078, 1976.

\bibitem{zworski2022semiclassical}
Maciej Zworski.
\newblock {\em Semiclassical analysis}, volume 138.
\newblock American Mathematical Society, 2022.

\end{thebibliography}
\end{document}